\newcommand{\bg}{\boldsymbol{g}}
\newcommand{\bx}{\boldsymbol{x}}
\newcommand{\bu}{\boldsymbol{u}}
\newcommand{\by}{\boldsymbol{y}}
\newcommand{\bxi}{\boldsymbol{\xi}}
\newcommand{\argmin}{\mathop{\mathrm{argmin}}}
\newcommand{\field}[1]{\mathbb{#1}}
\newcommand{\R}{\field{R}}
\newcommand{\E}{\field{E}}
\newcommand{\SPS}{\texttt{SPS}}
\newcommand{\SPSMAX}{\SPS$_{\max}$}
\newcommand{\SPSMAXL}{\SPS$_{\max}^{\ell}$}
\newtheorem{theorem}{Theorem}
\newtheorem{lemma}{Lemma}
\newtheorem{cor}{Corollary}
\newtheorem{remark}{Remark}
\newtheorem{proposition}{Proposition}
\newtheorem{definition}{Definition}
\title{New Perspectives on the Polyak Stepsize:\\ Surrogate Functions and Negative Results}
\author{%
  Francesco Orabona\\
  King Abdullah University of Science and Technology (KAUST)\\
  Thuwal, 23955-6900, Kingdom of Saudi Arabia\\
  \texttt{francesco@orabona.com} \\
  \And
  Ryan D'Orazio\\
  Mila Qu\'ebec AI Institute, Universit\'e de Montr\'eal\\
  Montr\'eal, QC, Canada\\
  \texttt{ryan.dorazio}@mila.quebec \\
}
\begin{document}

\maketitle

\begin{abstract}
The Polyak stepsize has been proven to be a fundamental stepsize in convex optimization, giving near optimal gradient descent rates across a wide range of assumptions.
The universality of the Polyak stepsize has also inspired many stochastic variants,
with theoretical guarantees and strong empirical performance. 
Despite the many theoretical results, our understanding of the convergence properties and shortcomings of the Polyak stepsize or its variants is both incomplete and fractured across different analyses. 
We propose a new, unified, and simple perspective for the Polyak stepsize and its variants as gradient descent on a surrogate loss. 
We show that \emph{each variant is equivalent to minimize a surrogate function with stepsizes that adapt to a guaranteed local curvature}.
Our general surrogate loss perspective is then used to provide a unified analysis of existing variants across different assumptions.
Moreover, we show a number of negative results proving that the non-convergence results in some of the upper bounds is indeed real.
\end{abstract}


\section{Introduction}

The iterative optimization of complex functions forms a cornerstone of modern machine learning, scientific computing, and engineering. Among the most foundational first-order methods is gradient descent, which iteratively refines a solution by moving in the direction opposite to the function's gradient. A critical aspect of gradient descent is the selection of an appropriate stepsize (or learning rate), as it dictates both the speed of convergence and the stability of the algorithm. The wrong choice of the stepsizes can lead to slow convergence or, conversely, to divergence, making the tuning process a significant practical hurdle.

In this landscape of stepsize selection strategies, the Polyak stepsize, proposed by \citet{Polyak69}
stands out for its theoretical elegance and convergence properties.
Starting from an arbitrary $\bx_1 \in \R^d$, the Polyak stepsize is defined as
\begin{equation}
\label{eq:polyak}
\bx_{t+1} = \bx_t - \frac{f(\bx_t)-f^\star}{\|\bg_t\|^2}\bg_t,
\end{equation}
where $\boldsymbol{0}\neq \bg_t \in \partial f(\bx_t)$ and $f^\star = \min_{\bx}f(\bx)$. If $\bg_t=\boldsymbol{0}$, then $\bx_{t+1}=\bx_t$.
This update rule can achieve linear convergence for strongly convex and smooth functions, $\mathcal{O}(1/T)$ rate for convex smooth functions, and $\mathcal{O}(1/\sqrt{T})$ rate for non-smooth convex ones. This is particularly interesting because all of these rates are achieved with a unique stepsize and without knowledge of smoothness or curvature constants. In other words, this update rule is \emph{adaptive} to the geometry of the functions to optimize.

Recently, the Polyak stepsize has seen a resurgence in the machine learning literature, with a plethora of variants. However, despite the big number of papers on this topic, one essential research question seems still to be missing: \emph{What makes the Polyak stepsize adaptive and when can it fail?}

\noindent\textbf{Contributions.} This paper aims to provide a novel framework to understand the Polyak stepsize, providing a clear geometric explanation of its adaptivity.
In particular, we show that the adaptivity is due to a simple but powerful observation: \emph{The Polyak stepsize minimizes a surrogate objective function that is always locally smooth.} As for standard smooth functions, we will show that the knowledge of the local smoothness constant is enough to obtain the correct rates. In addition, the local smoothness will depend only on the gradient itself, removing the need to estimate it.
Furthermore, we show that minimal curvature of the surrogate is inherited from the original function as well.
We also use this framework to extend its core idea to a family of algorithms. Then, we will show a number of negative results when $f(\bx^\star)$ is not known and for its use in the stochastic case. These negative results complete our understanding by showing that some non-vanishing terms in existing upper bounds are necessary.

\section{Related Work}
\label{sec:rel}

In the pionnering work of ~\citet{Ermolev66} stepsizes of the form $\eta_t \propto \nicefrac{1}{\|\bg_t\|^2}$ were proposed for non-smooth optimization.
Despite the many convergence guarantees enabled by ~\citet{Ermolev66}'s framework, in ~\citet{Polyak69} it is noted that linear convergence is not possible
with such stepsizes. As an alternative, Polyak suggests the stepsize~\eqref{eq:polyak}, which is shown to converge at favourable rates
with non-smooth convex functions, and strongly convex and smooth functions. 
In fact, contrary to common belief, \citet{Polyak69} was the first to show linear convergence with a rate comparable to gradient descent in the smooth and
strongly convex case.
Furthermore, the case where $f^\star$ is estimated was also studied, showing convergence to a level set if $f^\star$
is overestimated, and best-iterate convergence to a neighboorhood if it is underestimated.
The Polyak stepsize has since been extended and studied across several applications and domains.

\noindent\textbf{Non-smooth convex.} In non-smooth convex optimization, several schemes have been developed to estimate $f^\star$ on the fly~\citep{Brannlund93,Bertsekas99,goffinConvergenceSimpleSubgradient1999,sherali2000variable,Nedic01, Kiwiel04}.
In the finite-sum case with interpolation,~\eqref{eq:polyak} and variants have been studied as an incremental subgradient method~\citep{Nedic01,Kiwiel04}.

\noindent\textbf{Non-expansive operators.} In the context of non-expansive operators, the update~\eqref{eq:polyak} has also been studied as a special case of the subgradient
projector~\citep{bauschke2017convex, cegielski2012iterative, Combettes2009}; 
where it can be shown that subgradient descent with $\eta_t = \nicefrac{(f(x)-c)_{+}}{\|\bg_t\|^2}$ is a quasi-firmly non-expansive operator\footnote{An operator $T$ is quasi-firmly non-expansive if for all fixed points $\bx^\star$, 
$\|T(\bx)-\bx^\star\|^2 \leq \|\bx-\bx^\star\|^2-\|T(\bx)-\bx\|^2$. Note that a quasi-firmly non-expansive operators are also referred to as \emph{cutters}.}
if $f$ is convex and $c\geq f^\star$,
and $\bx \to \bx^\star$ if $f$ is continuous~\citep{bauschke2017convex}. Moreover, in the finite-sum setting, interpolation can also be viewed as iterating different quasi-firmly non-expansive operators with a common fixed point.
For example, applying a subgradient projector sequentially (i.e., cycling through the different component functions) in a way such that each function eventually
gets visited guarantees that $\bx_t \to \{f(\by) \leq c\}$~\citep[Example 5.9.7]{cegielski2012iterative}. For a survey on this topic see~\citet{CENSOR198483}.

\noindent\textbf{Deterministic.} In modern optimization, \eqref{eq:polyak} has been shown to achieve similar rates to gradient descent in
various common assumptions (e.g., Lipschitz, smoothness and strong convexity)~\citep{HazanK19}, and more recently with other assumptions such as weakly convex functions~\citep{davis2018subgradient},
$(L_0,L_1)$-smooth functions~\citep{takezawa2024parameterfree,gorbunov2025methods}, and directional smoothness~\citep{mishkin2024directional}.

\noindent\textbf{Stochastic.} The Polyak stepsize has also been extended to the stochastic case with emphasis on applications to
machine learning~\citep{rolinek2018l4,pmlr-v119-berrada20a,LoizouVLLJ21,Prazeres2021}.
The ALI-G method~\citep{pmlr-v119-berrada20a} and \SPSMAX~\citep{LoizouVLLJ21} use stochastic estimates via the sampled function $f(\bx,\bxi)$ and its gradient $\nabla f(\bx,\bxi)$
to perform a Polyak-like stepsize. \SPSMAX~in addition uses $\inf_x f(x,\bxi)$ as an estimate to $f^\star$, and is shown to converge at fast rates without a
neighbourhood under interpolation. Folowing \SPSMAX~many variants have been proposed for SGD: StoPS~\citep{horvath2022adaptive},
DECSPS and \SPSMAXL~\citep{OrvietoLJL22}, $\SPS_+$~\citep{GarrigosGS23}. Beyond SGD, other extensions include: mirror descent~\citep{DOrazio23},
with preconditioning~\citep{abdukhakimov2024stochastic}, with line-search~\citep{jiang2023adaptive}, and with momentum~\citep{wang2023generalized,schaipp2024momo,oikonomou2025stochastic}.

\noindent\textbf{Neighbourhood of Convergence.} For \SPSMAX, \citet{LoizouVLLJ21}  proved that the suboptimality gap is only guaranteed to shrink up to a factor that depends on the loss themselves. As we will explain in Section~\ref{sec:stoch}, this is equivalent to the guarantees in online learning where the regret is proportional to the cumulative loss of the competitor, typically denoted by $L^\star$. Hence, these kind of guarantees are usually called in online learning $L^\star$ bounds~\citep[see, e.g.,][Section 4.2]{Orabona19}.

\noindent\textbf{Surrogates.} \citet{gower2021stochastic} show that the Polyak stepsize is equivalent to online gradient descent with fixed stepsize on a sequence of adversarially chosen self-bounded surrogate losses. Differently from our framework, the adversarial nature of their losses does not allow to show that the algorithm is minimizing a fixed function. In comparison, our surrogate approach considers a fixed surrogate loss with local smoothness, where the Polyak stepsize is chosen to be the inverse of the local smoothness.

Surprisingly enough, despite the adaptivity of the Polyak stepsize across various
assumptions without modification, in previous literature there is no clear explanation why this is the case.

\section{Definitions and Notation}

We will use the following notation and definitions.
All the norms in this paper are L2 norms and will be denoted by $\|\cdot\|$.
For a function $f:\R^d \rightarrow \R$, we define a \emph{subgradient} of $f$ in $\bx \in \R^d$ as a vector $\bg \in \R^d$ that satisfies $f(\by)\geq f(\bx) + \langle \bg, \by-\bx\rangle, \ \forall \by \in \R^d$. We denote the set of subgradients of $f$ in $\bx$ by $\partial f(\bx)$.
For a differentiable function we have that $\partial f(\bx)=\{\nabla f(\bx)\}$.
A function $f:V \rightarrow \R$, differentiable in an open set containing $V$, is \emph{$L$-smooth} w.r.t. $\|\cdot\|$ if  $f(\by) \leq f(\bx) + \langle \nabla f(\bx) , \by - \bx \rangle + \frac{L}{2} \| \bx - \by \|^2$ for all $\bx, \by \in V$.

\begin{definition}
We say that a function $f$ has a $s$-sharp minimum in $\bx^\star$ if
\[
f(\bx)-f(\bx^\star)\geq s \|\bx-\bx^\star\|~.
\]
\end{definition}
Note that if $f$ has a sharp minimum then it is not differentiable at $\bx^\ast$~\citep{Polyak87} and 
if the function is also convex and $G$-Lipschitz we immediately have $G\geq s$.

\begin{definition}
We say that a function $f:\R^d \to \R$ is $L$-self-bounded if
\[
\|\bg\|^2
\leq 2L(f(\bx)-\inf_{\bx} \ f(\bx)), \ \forall \bg \in \partial f(\bx)~.
\]
\end{definition}
It is known that $L$-smooth functions are $L$-self-bounded~\citep[see, e.g., Lemma~4 in ][]{LiO19}, but this definition is strictly weaker because it does not assume differentiability.

\section{Polyak Stepsize is Gradient Descent on a Surrogate Function}

Let $f$ be convex and $\bx^\star \in \argmin_{\bx} \ f(\bx)$. Consider the following function:
\begin{equation}
\label{eq:polyak_surrogate}
\phi(\bx)=\frac12 \left(f(\bx)-f(\bx^\star)\right)^2~.
\end{equation}

Instead of viewing the Polyak stepsize \eqref{eq:polyak} with respect to $f$ we propose to view it equivalently as a subgradient method with respect to $\phi$. By the chain rule of subgradients \citep{bauschke2017convex}[Corollary 16.72],
subgradient descent with the Polyak stepsize \eqref{eq:polyak} is equivalent to subgradient descent on $\phi$ with stepsize $\eta_t = \frac{1}{\|\bg_t\|^2}$.
This perspective may seem superfluous, howerever, we will show that $\frac{1}{\|\bg_t\|^2}$ is strongly related to a certain notion of local curvature of $\phi$,
local star upper curvature.

\begin{figure}
    \centering
    \includegraphics[width=0.75\linewidth]{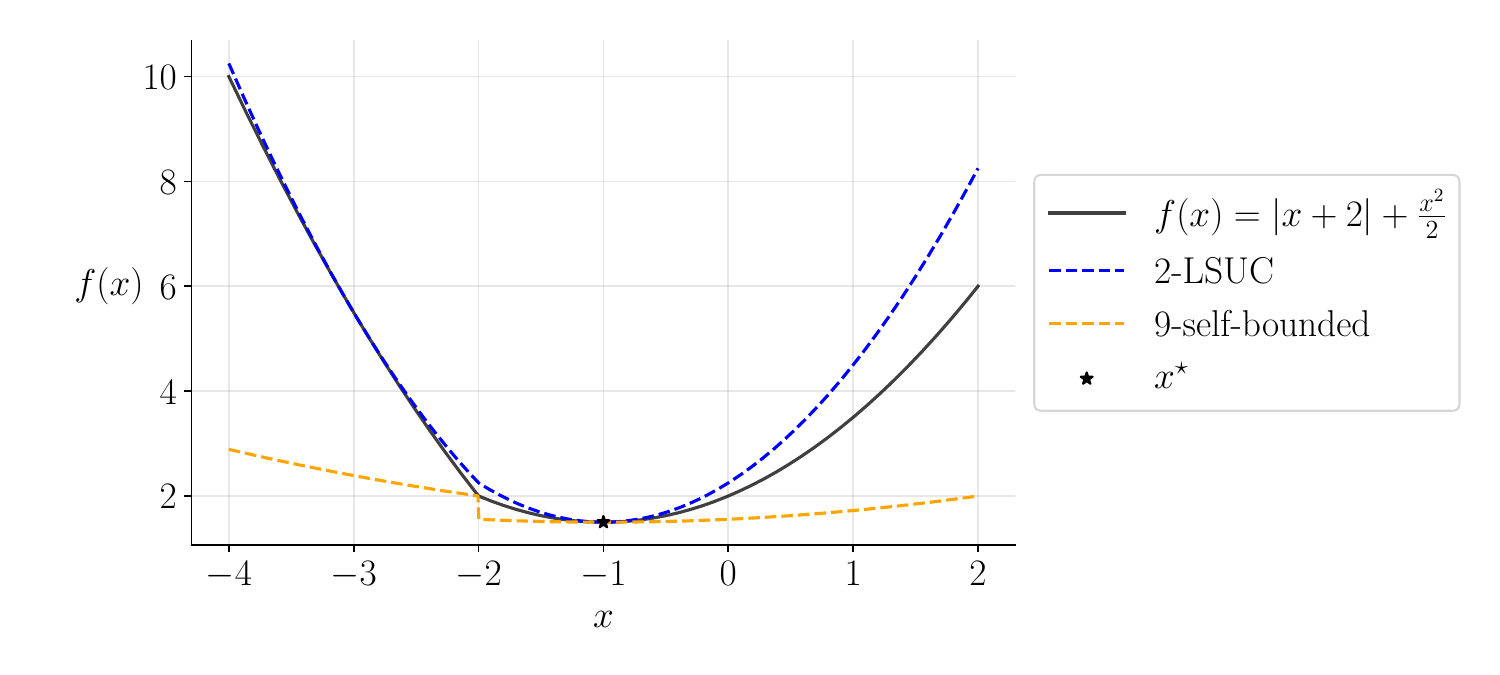}
    \caption{\small The function $f(x)=|x+2|+\frac{x^2}{2}$ is non-smooth but is $2$-LSUC as demonstrated by the blue curve, $f(x^\star)-\langle \bg ,x^\star-x\rangle -\nicefrac{1}{4}\|\bg\|^2$, being larger than $f(x)$ for all $x$ and $\bg \in \partial f(x)$. Similarly, $f$ is self-bounded but with the larger constant $L=9$.}
    \label{fig:ineq}
\end{figure}

\begin{definition}[Local star upper curvature (LSUC)]
We say that a function $f$ with minimizer $\bx^\star$ has $\lambda_{\by}$-local star upper curvature (LSUC) around $\by$ if there exists $\lambda_{\by}>0$ such that
\[
f(\bx^\star)-f(\by)-\langle \bg_{\by}, \bx^\star-\by\rangle
\geq \frac{1}{2 \lambda_{\by}} \|\bg_{\by}\|^2_2, \ \forall \bg_{\by} \in \partial f(\by)~.
\]
\end{definition}
Note that if the function is LSUC everywhere, then it must be convex since we assume the existence of a subgradient.\footnote{If $\bg_t$ is not a subgradient but a directional derivative then $f$ would be guaranteed to be star-convex~\citep{JOULANI2020108}.}
It is also immediate to show that convex $L$-smooth functions are also $L$-LSUC. Indeed, for convex $L$-smooth functions we have that~\citep[Theorem 2.1.5]{Nesterov04}
\[
f(\bx)-f(\by)
-\langle \nabla f(\by), \bx-\by\rangle 
\geq \frac{1}{2L}\|\nabla f(\bx)-\nabla f(\by)\|^2, \ \forall \bx,\by \in \R^d~.
\]
So, it is enough to set $\bx=\bx^\star$ to obtain the above definition.
However, the inclusion is strict, because there exist functions that
are not smooth and still satisfy the above definition.
For example, as shown in Figure~\ref{fig:ineq}, 
one can easily verify that $f(x)=|x+2|+\frac{x^2}{2}$ is $2$-LSUC and $9$-self-bounded but not differentiable $x=-2$, hence it is not smooth.

Finally, if the star-upper-curvature holds globally, i.e., there exists $0<\lambda<\lambda_{\by}$ for all $\by$, 
then we can show that this condition is equivalent to the upper quadratic growth condition in \citet{GuilleGGM21}.\footnote{A function $f$ satisfies the $\mu$-quadratic growth condition if $f(\bx)- f(\bx^\star) \geq \nicefrac{\mu}{2}\|\bx-\bx^\star\|^2$.}
This observation was first made by \citet[Theorem 2.6]{GoujaudTD22}, we include the precise statement and proof in the Appendix~\ref{sec:equivalence}.

The key observation in the next Theorem is that $\phi$ is \emph{always} locally star upper curved, regardless of the curvature (or lack of it) of the function $f$. Moreover, it will inherit additional curvature from $f$. The proof can be found in Appendix~\ref{sec:proof_thm_curv-polyak}.
\begin{theorem}[Curvature of the Polyak surrogate]
\label{thm:curv-polyak}
Let $f(x)$ be convex and define $\bx^\star \in \argmin_{\bx} \ f(\bx)$. Define $\phi(\bx)=\frac12 (f(\bx)-f(\bx^\star))^2$. Then, we have
\begin{itemize}
\item $\phi$ is $\|\bg_{\by}\|^2$-LSUC around any $\by$ for any $\bg_{\by} \in \partial f(\by)$.
\item If $f$ is $s$-sharp, then $\phi$ has $s^2$-quadratic growth.
\item If $f$ has $\mu$-quadratic growth and $L$-self bounded, then $\phi$ satisfies a local quadratic growth:
\[
\phi(\bx)\geq \frac12 \frac{\mu \|\bg\|^2}{2L} \|\bx-\bx^\star\|^2, \ \forall \bg \in \partial f(\bx)~.
\]
\end{itemize}
\end{theorem}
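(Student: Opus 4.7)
The plan is to treat each bullet separately, exploiting that $\phi(\bx) = \tfrac{1}{2}(f(\bx)-f(\bx^\star))^2$ is a squared-positive-part of a convex function, so the chain rule for subgradients (Bauschke--Combettes, Cor. 16.72) gives the clean identification
\[
\partial \phi(\by) \;=\; \bigl(f(\by)-f(\bx^\star)\bigr)\,\partial f(\by),
\]
i.e.\ every subgradient of $\phi$ at $\by$ is of the form $a\bg_{\by}$ with $a := f(\by)-f(\bx^\star) \ge 0$ and $\bg_{\by}\in\partial f(\by)$. I would establish this identification first, and then each bullet becomes a short algebraic verification.

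For the LSUC bullet, I would plug $a\bg_{\by}$ into the LSUC definition of $\phi$ at $\by$, using $\phi(\bx^\star)=0$. The desired inequality becomes
\[
-\tfrac{1}{2}a^2 \;-\; a\,\langle \bg_{\by},\, \bx^\star-\by\rangle
\;\ge\; \tfrac{1}{2\|\bg_{\by}\|^2}\,\|a\bg_{\by}\|^2
\;=\; \tfrac{1}{2}a^2,
\]
so it reduces to proving $-a\,\langle \bg_{\by},\,\bx^\star-\by\rangle \ge a^2$. Convexity of $f$ at $\by$ gives $\langle \bg_{\by},\,\bx^\star-\by\rangle \le f(\bx^\star)-f(\by) = -a$, and multiplying by $-a\le 0$ flips the direction to produce exactly the bound. (The degenerate case $\bg_{\by}=\bzero$ forces $\by=\bx^\star$, where both sides are zero and LSUC is vacuously true.)

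For the sharpness bullet, $s$-sharpness says $f(\bx)-f(\bx^\star) \ge s\|\bx-\bx^\star\|$. Both sides are nonnegative, so squaring and dividing by $2$ gives $\phi(\bx) \ge \tfrac{s^2}{2}\|\bx-\bx^\star\|^2$, which is the claim. For the last bullet I would split one factor of the square off,
\[
\phi(\bx) \;=\; \tfrac{1}{2}\bigl(f(\bx)-f(\bx^\star)\bigr)\cdot\bigl(f(\bx)-f(\bx^\star)\bigr),
\]
apply $L$-self-boundedness to one factor to get $f(\bx)-f(\bx^\star) \ge \tfrac{\|\bg\|^2}{2L}$ for any $\bg\in\partial f(\bx)$, and apply $\mu$-quadratic growth to the other. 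Multiplying the two lower bounds and halving reproduces the stated inequality $\phi(\bx) \ge \tfrac{1}{2}\tfrac{\mu \|\bg\|^2}{2L}\|\bx-\bx^\star\|^2$.

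None of this is delicate; the whole proof is three applications of the definitions plus one use of convexity. The only place I would pause is the chain-rule step, both to justify that the outer map $a\mapsto a^2/2$ only needs to be evaluated on $a\ge 0$ (so we can use the monotone-composition form) and to dispose of the $\bg_{\by}=\bzero$ corner case in the LSUC bullet. After that, each item is essentially a one-line computation.
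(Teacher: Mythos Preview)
Your proposal is correct and follows essentially the same route as the paper: identify $\partial\phi(\by)=(f(\by)-f(\bx^\star))\,\partial f(\by)$, then for the first bullet reduce the LSUC inequality to $a\bigl(a-\langle \bg_{\by},\by-\bx^\star\rangle\bigr)\le 0$ and invoke convexity of $f$; for the second bullet square the sharpness inequality; for the third bullet bound one factor of $(f(\bx)-f(\bx^\star))^2$ via self-boundedness and the other via quadratic growth. The only minor imprecision is that $\bg_{\by}=\bzero$ implies $\by$ is \emph{a} minimizer (not necessarily $\bx^\star$ if minimizers are non-unique), but then $a=0$ and the inequality is still trivially $0\ge 0$, so nothing breaks.
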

This theorem tells us that, regardless of the curvature of $f$, we can always construct the function $\phi$ that is locally curved. It is well-known that for $L$-smooth functions one can use the stepsize $\eta=\frac{1}{L}$
and achieve a rate between $\mathcal{O}(1/T)$ and a linear one, depending on the presence of strong convexity. Here, we show a similar result: GD can use stepsizes that depend on the local star upper curvature in \emph{all} cases.
Note however, unlike GD with a constant stepsize and smoothness, we do not have a descent lemma with $\phi$. Indeed this is not possible as it would guarantee
a $\mathcal{O}(1/T)$ rate for the last iterate which was shown to be impossible by~\citet{GoujaudTD22} for $QG+(L)$ functions (i.e., L-LUSC functions).

\begin{lemma}
\label{lemma:one_step}
Let $\phi$ convex and define $\bx^\star \in \argmin_{\bx} \ \phi(\bx)$. Assume $\phi$ to be $\lambda_{\bx}$-LSUC around any point $\bx$. Then, using subgradient descent with stepsizes $\eta_t=\frac{1}{\lambda_{\bx_t}}$ guarantees
\begin{equation}
\label{eq:polyak_onestep_surrogates}
\eta_t \left(\phi(\bx_t)-\phi(\bx^\star)\right)
\leq \frac{1}{2}\|\bx_{t}-\bx^\star\|^2- \frac12 \|\bx_{t+1}-\bx^\star\|^2~.
\end{equation}
Summing this inequality over time, we also have
\begin{equation}
\label{eq:polyak_convergence_surrogates}
\phi(\bar{\bx}_T) \sum_{t=1}^T \eta_t 
\leq \sum_{t=1}^T \eta_t \phi(\bx_t)
\leq \frac{\|\bx_1-\bx^\star\|^2}{2 }, 
\end{equation}
where $\bar{\bx}_T= \frac{1}{\sum_{t=1}^T \eta_t} \bx_t$ or $\bar{\bx}_T = \argmin_{\bx\in\{\bx_1, \dots, \bx_{T}\}} \ \phi(\bx)$.
\end{lemma}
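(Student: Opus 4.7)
The plan is to run the standard one-step potential-function argument for subgradient descent, but with the LSUC inequality taking the role that $L$-smoothness plays in the classical analysis, and with the stepsize chosen precisely to cancel the $\|\bg_t\|^2$ correction term.

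First I would expand the squared distance to $\bx^\star$ after one GD step. Writing $\bg_t \in \partial\phi(\bx_t)$ and $\bx_{t+1} = \bx_t - \eta_t \bg_t$, the identity
\begin{equation*}
\|\bx_{t+1}-\bx^\star\|^2
= \|\bx_t-\bx^\star\|^2 - 2\eta_t \langle \bg_t, \bx_t-\bx^\star\rangle + \eta_t^2 \|\bg_t\|^2
\end{equation*}
is the starting point. Next I would invoke the $\lambda_{\bx_t}$-LSUC property of $\phi$ at $\bx_t$, which, after moving terms across, reads
\begin{equation*}
\langle \bg_t, \bx_t-\bx^\star\rangle
\ \geq\ \phi(\bx_t)-\phi(\bx^\star) + \frac{1}{2\lambda_{\bx_t}}\|\bg_t\|^2.
\end{equation*}
Substituting this lower bound into the expansion produces
\begin{equation*}
\|\bx_{t+1}-\bx^\star\|^2
\leq \|\bx_t-\bx^\star\|^2 - 2\eta_t\bigl(\phi(\bx_t)-\phi(\bx^\star)\bigr) - \frac{\eta_t}{\lambda_{\bx_t}}\|\bg_t\|^2 + \eta_t^2 \|\bg_t\|^2.
\end{equation*}
Plugging in the exact choice $\eta_t = 1/\lambda_{\bx_t}$ cancels the last two terms, and rearranging gives (5).

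To get (6), I would telescope (5) over $t=1,\dots,T$, yielding
\begin{equation*}
\sum_{t=1}^T \eta_t\bigl(\phi(\bx_t)-\phi(\bx^\star)\bigr) \ \leq\ \tfrac{1}{2}\|\bx_1-\bx^\star\|^2,
\end{equation*}
and use $\phi(\bx^\star)=0$ (which holds for the Polyak surrogate~\eqref{eq:polyak_surrogate} of interest, and for any $\phi\geq 0$ attaining zero at its minimum) to drop that term. For the final bound on $\phi(\bar{\bx}_T)$ I would handle the two choices separately: for the weighted average, Jensen's inequality applied to the convex $\phi$ with weights $\eta_t / \sum_s \eta_s$ yields $\phi(\bar{\bx}_T)\sum_t\eta_t \leq \sum_t\eta_t \phi(\bx_t)$; for the $\argmin$ variant, the same inequality is immediate since $\phi(\bar{\bx}_T)\leq \phi(\bx_t)$ for every $t$.

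I do not expect a real obstacle here; the one subtle design point, rather than difficulty, is recognizing that the stepsize $\eta_t = 1/\lambda_{\bx_t}$ is chosen precisely to annihilate the $\|\bg_t\|^2$ residual produced by the LSUC gain minus the $\eta_t^2\|\bg_t\|^2$ cost, so that no curvature/Lipschitz estimate of $\phi$ beyond LSUC enters the bound. The mild notational point worth flagging is the typo $f$ vs.\ $\phi$ in the LSUC hypothesis (the hypothesis is on $\phi$), which I would state cleanly at the outset of the proof.
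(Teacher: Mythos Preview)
Your proposal is correct and follows essentially the same approach as the paper's proof: expand the squared distance after one GD step, apply the LSUC inequality to control $\langle \bg_t, \bx_t-\bx^\star\rangle$, use $\eta_t=1/\lambda_{\bx_t}$ to cancel the $\|\bg_t\|^2$ terms, then telescope and apply Jensen/min. You are in fact slightly more careful than the paper in explicitly noting that passing from $\sum_t\eta_t(\phi(\bx_t)-\phi(\bx^\star))$ to $\sum_t\eta_t\phi(\bx_t)$ in \eqref{eq:polyak_convergence_surrogates} uses $\phi(\bx^\star)=0$, and in flagging the $f$ vs.\ $\phi$ typo in the hypothesis.
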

\begin{proof}
From the classic one-step analysis of GD~\citep[see, e.g.,][]{Orabona19}, we have
\begin{align*}
\eta_t \langle \bg_{t}, \bx_t-\bx^\star\rangle
= \frac12 \|\bx_{t}-\bx^\star\|^2 - \frac12 \|\bx_{t+1}-\bx^\star\|^2 + \frac{\eta_t^2}{2}\|\bg_t\|^2,
\end{align*}
for any $\bg_{\bx_t} \in \partial \phi(\bx_t)$.
Now, we use the fact that $\phi$ is LSUC and the definition of $\eta_t$ we obtain the stated bound.
Summing from $t=1$ to $T$ and discarding the negative term on the right hand side concludes the proof.
\end{proof}

The above discussion can be summarized in the following theorem.
\begin{theorem}
    The Polyak stepsize in \eqref{eq:polyak} is equivalent to subgradient descent on the function $\phi$ in \eqref{eq:polyak_surrogate}, when using stepsizes $\eta_t$ equal to the inverse of the local-star-upper curvature of $\phi$ in $\bx_t$.
\end{theorem}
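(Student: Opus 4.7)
The plan is to verify the identity directly by combining two facts already established in the preceding development. The claim reduces to matching, coordinate by coordinate in the update, the subgradient descent step on $\phi$ with the Polyak step on $f$, once the stepsize $\eta_t$ is read off from Theorem~\ref{thm:curv-polyak}.

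First I would compute a subgradient of $\phi$ at an arbitrary iterate $\bx_t$. Writing $h(\bx) := f(\bx) - f(\bx^\star)$, convexity of $f$ together with $h \geq 0$ lets me view $\phi = \frac{1}{2}h^2$ as the composition of the nondecreasing convex $C^1$ map $u \mapsto \frac{1}{2}(\max(u,0))^2$ with the convex function $h$. The chain rule for subgradients~\citep{bauschke2017convex} then gives $\partial \phi(\bx_t) = (f(\bx_t) - f(\bx^\star))\,\partial f(\bx_t)$, so for every $\bg_t \in \partial f(\bx_t)$ the vector $(f(\bx_t) - f^\star)\bg_t$ is a valid subgradient of $\phi$ at $\bx_t$.

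Second, I would read off the stepsize from Theorem~\ref{thm:curv-polyak}, which asserts that $\phi$ is $\|\bg_t\|^2$-LSUC around $\bx_t$ for any $\bg_t \in \partial f(\bx_t)$. The inverse local-star-upper curvature at $\bx_t$ is therefore $\eta_t = 1/\|\bg_t\|^2$. Plugging this stepsize and the subgradient computed above into a single step of subgradient descent on $\phi$ yields
\[
\bx_{t+1} = \bx_t - \eta_t\,(f(\bx_t) - f^\star)\bg_t = \bx_t - \frac{f(\bx_t) - f^\star}{\|\bg_t\|^2}\bg_t,
\]
which is exactly~\eqref{eq:polyak}. The degenerate case $\bg_t = \boldsymbol{0}$ is handled by convention on both sides: $\bx_t$ then minimizes $f$ and hence $\phi$, so the update keeps $\bx_{t+1} = \bx_t$ in either interpretation.

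I do not foresee a serious obstacle; the theorem is essentially a repackaging of the subgradient chain rule together with Theorem~\ref{thm:curv-polyak}. The only minor care required is to apply the chain rule with the nonnegative inner function $h$, which is why the surrogate is centred at $f(\bx^\star)$ rather than at an arbitrary lower bound.
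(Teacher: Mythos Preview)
Your proposal is correct and follows precisely the route the paper takes: the theorem is stated there as a summary of the preceding discussion, which consists of exactly the two ingredients you use---the subgradient chain rule giving $(f(\bx_t)-f^\star)\bg_t\in\partial\phi(\bx_t)$, and Theorem~\ref{thm:curv-polyak} identifying the LSUC constant as $\|\bg_t\|^2$. Your handling of the chain rule via the nondecreasing $C^1$ outer map $u\mapsto\tfrac12(\max(u,0))^2$ and of the degenerate case $\bg_t=\boldsymbol{0}$ is a clean way to make the argument fully rigorous.
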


This theorem and Lemma~\ref{lemma:one_step} do not give us a rate, however we can immediately observe that if $\sum_t \eta_t = +\infty$ we also have $\phi(\bx_t) \to 0$, implying convergence of the last iterate (see Remark~\ref{remark:fejer} for more details).

To obtain known rates for the Polyak stepsize, we can use additional assumptions on $f$. However, we want to stress that, differently from prior results, we explicitly get a convergence rate for the surrogate function $\phi$, the function actually minimized by \eqref{eq:polyak}. The rates on the original function $f$ are immediate by just taking the square root.

If $f$ is $G$-Lipschitz, then $\sum_{t=1}^T \eta_t \geq T/G^2$. Hence, the final rate is $\phi(\bar{\bx}_T)\leq \frac{G^2}{2 T}\|\bx_1-\bx^\star\|^2$.
So, the surrogate loss converges as $\mathcal{O}(1/T)$, as expected by a loss with upper curvature.

Now, instead let's assume that the function $f$ is $L$-self-bounded.
Using inequalities between harmonic and arithmetic means, we have
\begin{align*}
\frac{1}{\sum_{t=1}^T \frac{1}{\|\bg_{\bx_t}\|^2} }
\leq \frac{1}{T^2}\sum_{t=1}^T \|\bg_{\bx_t}\|^2 
= \frac{1}{T^2}\sum_{t=1}^T \frac{\|\bg_{\bx_t}\|^4}{\|\bg_{\bx_t}\|^2}
\leq \frac{1}{T^2}\sum_{t=1}^T \frac{8 L^2 \phi(\bx_t)}{\|\bg_{\bx_t}\|^2}
\leq \frac{8L^2}{T^2} \frac{1}{2} \|\bx_1-\bx^\star\|^2,
\end{align*}
where in the last inequality we use \eqref{eq:polyak_convergence_surrogates}.
This implies
\[
\phi(\bar{\bx}_T)
\leq \frac{1}{2 \sum_{t=1}^T \eta_t }\|\bx_1-\bx^\star\|^2
= \frac{1}{2 \sum_{t=1}^T \frac{1}{\|\bg_{\bx_t}\|^2}} \|\bx_1-\bx^\star\|^2
\leq \frac{4L^2}{T^2}\|\bx_1-\bx^\star\|^4 ~.
\]
Similarly, it is equally easy to obtain rates for H\"older-smooth functions, see Theorem~\ref{thm:holder} for more details.

We can also assume that $f$ is $s$-sharp and $G$-Lipschitz. So, from \eqref{eq:polyak_onestep_surrogates} and the fact that $\phi$ 
has $s^2$ quadratic growth from Theorem~\ref{thm:curv-polyak}, then by Lemma~\ref{lemma:one_step} we have
\[
\frac{s^2}{G^2} \frac{1}{2} \|\bx_t-\bx^\star\|^2
\leq s^2 \eta_t \frac{1}{2} \|\bx_t-\bx^\star\|^2
\leq \eta_t (\phi(\bx_t) -\phi(\bx^\star))
\leq \frac{1}{2}\|\bx_{t}-\bx^\star\|^2- \frac12 \|\bx_{t+1}-\bx^\star\|^2~.
\]
Using the fact that $\frac{s^2}{G^2}\leq 1$, this immediately gives a linear convergence rate.

\section{Generalizing the Polyak Stepsize: More Surrogates and Stochastic Setting}\label{sec:gen-polyak}

We have shown how the Polyak stepsize is just GD on a particular function with stepsizes adapted to the local curvature of the function. 
In this section, we show that we can construct an entire family of surrogate losses with similar guarantees, 
while also preparing ourselves for the stochastic setting.

Instead of the function~\eqref{eq:polyak_surrogate}, we consider more generally the surrogate
\[
\psi(\bx)=\frac12 h^2(\bx),
\]
where $h:\R^d \to \R_\geq 0$ is convex. 
As a special case we can recover~\eqref{eq:polyak_surrogate} with $h(\bx)=f(\bx)-f^\star$, however, 
in general we do not need to know $f^\star$. For example we can take $h = (f(\bx) - a)_+$ for any $a$.
Intuitively, the role of $h$ is to transform $f$ into a positive function.
We show that $\psi$ generally has an approximate local-star-upper curvature, where the approximation stems from $h$ potentially being strictly positive in $\bx^\star$.

\begin{definition}[Approximate local-star-upper curvature]
We will say that a function $f$ with minimizer $\bx^\star$ has $\epsilon$-approximate $\lambda_{\by}$-star-upper-curvature around $\by$ if there exists $\epsilon$ such that
\[
f(\bx^\star)-f(\by)-\langle \bg_{\by}, \bx^\star-\by\rangle
\geq \frac{1}{2 \lambda_{\by}} \|\bg_{\by}\|^2_2-\epsilon, \ \forall \bg_{\by} \in \partial f(\by)~.
\]
\end{definition}

Since we do not make explicit assumptions $h$ with respect to $f$, we can only hope to achieve convergence to the minimum of $h$ or $\psi$. So, from here onward we denote $\bx^\star$ as as minizer of $\psi$.

\begin{lemma}
Let $h:\R^d \to \R_{\geq 0}$ be convex. Define $\psi=\frac12 h^2$. Then, $\psi$ is $(2 \sqrt{\psi(\bx)\psi(\bx^\star)}-\psi(\bx^\star))$-approximate $\|\bg\|$-LSUC for any $\bg \in \partial h(\bx)$.
\end{lemma}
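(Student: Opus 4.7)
The plan is to verify the approximate local-star-upper-curvature inequality for $\psi$ at an arbitrary point $\bx$ directly, by (i) producing a subgradient of $\psi$ from a given $\bg\in\partial h(\bx)$ via the chain rule, (ii) bounding the relevant inner product using the convexity of $h$, and (iii) reading off $\epsilon$ as the gap between the resulting lower bound and the exact LSUC right-hand side.

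For step (i), since $h\ge 0$ is convex and $t\mapsto t^2/2$ is nondecreasing on $\R_{\ge 0}$, the standard chain rule for subdifferentials of compositions with a differentiable, nondecreasing outer convex function yields $h(\bx)\bg\in\partial\psi(\bx)$ whenever $\bg\in\partial h(\bx)$; this is the subgradient I plug into the LSUC definition for $\psi$. For step (ii), the subgradient inequality for $h$ gives $\langle\bg,\bx^\star-\bx\rangle\le h(\bx^\star)-h(\bx)$, and multiplying by the nonnegative scalar $h(\bx)$ produces $\langle h(\bx)\bg,\bx^\star-\bx\rangle\le h(\bx)h(\bx^\star)-h(\bx)^2$. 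Substituting this into $\psi(\bx^\star)-\psi(\bx)-\langle h(\bx)\bg,\bx^\star-\bx\rangle$ and using $\psi=\tfrac12 h^2$ collapses the expression, after completing the square, to $\tfrac12(h(\bx^\star)-h(\bx))^2$.

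For step (iii), the right-hand side of the exact LSUC inequality with constant $\|\bg\|^2$ simplifies as $\tfrac{1}{2\|\bg\|^2}\|h(\bx)\bg\|^2=\tfrac12 h(\bx)^2=\psi(\bx)$. Hence the required $\epsilon$ is the difference $\psi(\bx)-\tfrac12(h(\bx^\star)-h(\bx))^2$, which expands to $h(\bx)h(\bx^\star)-\psi(\bx^\star)$; rewriting $h(\bx)=\sqrt{2\psi(\bx)}$ and $h(\bx^\star)=\sqrt{2\psi(\bx^\star)}$ yields exactly $2\sqrt{\psi(\bx)\psi(\bx^\star)}-\psi(\bx^\star)$, as claimed. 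The main obstacle is essentially bookkeeping rather than conceptual: one must be careful that the chain rule applies (it does, thanks to $h\ge 0$, so that $\psi$ is convex and its subdifferential factors as $h(\bx)\partial h(\bx)$), and one should double-check the intended exponent in the LSUC constant — to be consistent with Theorem~\ref{thm:curv-polyak} and with the stated $\epsilon$, the constant should be read as $\|\bg\|^2$ rather than $\|\bg\|$.
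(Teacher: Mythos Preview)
Your proof is correct and follows essentially the same approach as the paper: both arguments produce the subgradient $h(\bx)\bg\in\partial\psi(\bx)$ via the chain rule, use the convexity inequality $\langle\bg,\bx^\star-\bx\rangle\le h(\bx^\star)-h(\bx)$ multiplied by $h(\bx)\ge 0$, and then read off the approximation error as $h(\bx)h(\bx^\star)-\psi(\bx^\star)=2\sqrt{\psi(\bx)\psi(\bx^\star)}-\psi(\bx^\star)$. The only cosmetic difference is that you pass through the intermediate quantity $\tfrac12(h(\bx^\star)-h(\bx))^2$ before subtracting $\psi(\bx)$, whereas the paper bounds the rearranged LSUC expression $\psi(\bx)-\langle\tilde{\bg},\bx-\bx^\star\rangle+\tfrac{1}{2\|\bg\|^2}\|\tilde{\bg}\|^2$ directly by $h(\bx)h(\bx^\star)$; your observation that the curvature constant should read $\|\bg\|^2$ rather than $\|\bg\|$ is also correct and matches the paper's actual computation.
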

\begin{proof}
Given that the function $\psi(\bx)$ might not be differentiable, we have to be careful in the calculation of its subgradients.
We have
\begin{align*}
\psi(\by)
&= \frac12 h^2(\by)
\geq \frac12 h^2(\bx) + h(\bx) [h(\by)-h(\bx)]\\
&\geq \frac12 h^2(\bx) + h(\bx) \langle \bg, \by-\bx\rangle
= \psi(\bx) + \langle h(\bx) \bg, \by -\bx\rangle,
\end{align*}
where $\bg \in \partial h(\bx)$ and the first inequality is due to the fact that $\frac{1}{2}(\cdot)^2$ is a convex function. Hence, we see that $\tilde{\bg}:= h(\bx) \bg$ is a subgradient of $\psi$ in $\bx$.
Hence, for any $\bu \in \R^d$, we have
\begin{align*}
\psi(\bx)-\langle \tilde{\bg}, \bx-\bu\rangle + \frac{1}{2\| \bg \|^2} \|\tilde{\bg}\|^2_2
&= \frac12 h^2(\bx)) - h(\bx)\langle \bg, \bx-\bu\rangle + \frac12 h(\bx)^2\\
&= h(\bx) \left(h(\bx) - \langle \bg, \bx-\bu\rangle \right)\\
&= h(\bx) \left(h(\bx)- h(\bu) - \langle \bg, \bx-\bu\rangle + h(\bu)\right)\\
&\leq  h(\bx) \cdot h(\bu)
=2 \sqrt{\psi(\bx) \psi(\bu)},
\end{align*}
where the inequality is due to the convexity of $h$ and the fact that $h(\bx)\geq 0$. Setting $\bu = \bx^\star$, we have the stated bound.
\end{proof}

With approximate local curvature, a generalization of 
Lemma~\ref{lemma:one_step} is immediate.
\begin{lemma}
\label{lemma:master_revised}
Assume $\psi:\R^d \to \R$ to be $\epsilon_t$-approximately $\lambda_t$-star-upper-curve around $\bx_t$.
Then, for any $\eta_t>0$, any $\bg_t \in \partial \psi(\bx_t)$, and $\bx_{t+1}=\bx_t-\eta_t \bg_t$
we have
\[
\eta_t (\psi(\bx_t) -\psi(\bx^\star))
\leq \frac{\|\bx^\star-\bx_t\|^2_2}{2}- \frac{\|\bx^\star-\bx_{t+1}\|^2_2}{2} + \frac{\eta_t}{2}\left(\eta_t-\frac{1}{\lambda_t}\right) \|\bg_t\|^2 + \eta_t \epsilon_t~.
\]
\end{lemma}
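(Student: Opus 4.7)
The plan is a direct generalization of the proof of Lemma~\ref{lemma:one_step}, now keeping the error term $\epsilon_t$ and not assuming that $\eta_t$ equals $1/\lambda_t$. The approach has three ingredients: the standard one-step identity for gradient descent, the approximate local-star-upper-curvature inequality at $\bx_t$, and a bit of algebraic rearrangement.

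First I would expand the squared distance to $\bx^\star$ after a GD step, which gives the identity
\[
\eta_t \langle \bg_t, \bx_t - \bx^\star\rangle
= \frac{\|\bx_t-\bx^\star\|^2}{2} - \frac{\|\bx_{t+1}-\bx^\star\|^2}{2} + \frac{\eta_t^2}{2}\|\bg_t\|^2,
\]
using only the update $\bx_{t+1}=\bx_t-\eta_t \bg_t$. This is the same first step as in Lemma~\ref{lemma:one_step}.

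Next I would invoke the $\epsilon_t$-approximate $\lambda_t$-LSUC hypothesis at $\bx_t$ for the subgradient $\bg_t \in \partial \psi(\bx_t)$, which after rearranging reads
\[
\langle \bg_t, \bx_t - \bx^\star\rangle
\geq \psi(\bx_t) - \psi(\bx^\star) + \frac{1}{2\lambda_t}\|\bg_t\|^2 - \epsilon_t.
\]
Multiplying by $\eta_t>0$ and combining with the GD identity immediately yields
\[
\eta_t\bigl(\psi(\bx_t)-\psi(\bx^\star)\bigr) + \frac{\eta_t}{2\lambda_t}\|\bg_t\|^2 - \eta_t \epsilon_t
\leq \frac{\|\bx_t-\bx^\star\|^2}{2} - \frac{\|\bx_{t+1}-\bx^\star\|^2}{2} + \frac{\eta_t^2}{2}\|\bg_t\|^2.
\]
Moving the $\|\bg_t\|^2$ and $\epsilon_t$ terms to the right and factoring produces exactly the stated bound.

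There is no real obstacle here: the only thing to watch is that the LSUC inequality is applied with $\bx^\star$ playing the role of the minimizer in the definition (which matches the hypothesis, since $\bx^\star=\argmin_{\bx}\psi(\bx)$ in the context of the preceding discussion), and that we keep the sign of $\eta_t-1/\lambda_t$ general so that we do not accidentally discard a term that may be positive. Note that we do not need to assume $\eta_t\leq 1/\lambda_t$ for the statement to hold, though of course the middle term on the right becomes negative (and thus safely droppable) exactly in that regime, recovering Lemma~\ref{lemma:one_step} as the special case $\eta_t=1/\lambda_t$, $\epsilon_t=0$.
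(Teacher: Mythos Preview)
Your proof is correct and follows exactly the approach the paper intends: the paper does not give an explicit proof of this lemma, stating only that it is an immediate generalization of Lemma~\ref{lemma:one_step}, and your argument---the one-step GD identity combined with the approximate LSUC inequality and rearrangement---is precisely that generalization.
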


The last two lemmas tell us that the properties of the surrogate functions breaks if $\psi(\bx^\star)\neq 0$. 
Hence, we will not be able to prove convergence results, but only that, for example, the suboptimality gap will converge up to a floor that depends on $\psi(\bx^\star)$.
However, in Section~\ref{sec:negative} we will show that this is not an artifact of the proof. Indeed, we can construct simple one-dimensional functions where the generalized Polyak stepsize does not converge.

\subsection{Stochastic Approximation Setting}
\label{sec:stoch}

Consider now the case that we are minimizing $F(\bx):=\E_{\bxi \sim D}[f(\bx,\bxi)]$, where $f:\R^d \times \mathcal{S} \to \R$, that covers both the stochastic approximation and finite-sum settings.
We do not know the distribution $D$, but we assume that we can sample $\bxi$ i.i.d. from $D$.

In this setting, we argue that the Polyak stepsize makes sense only in restricted settings. In fact, the interpretation of the Polyak stepsize as minimizing a surrogate function implies that in the stochastic setting we will minimize the function $\E_{\bxi\sim D}[\frac12 h^2(\bx,\bxi)]$, where the function $h(\cdot, \bxi)$ depends on the particular variant of the stochastic Polyak stepsize. It is clear that in general $\argmin_{\bx} \ \E_{\bxi \sim D}[f(\bx,\bxi)]$ can be completely different from $\argmin_{\bx} \ \E_{\bxi\sim D}[\frac12 h^2(\bx,\bxi)]$.

\begin{algorithm}[t]
\caption{Generalized Polyak Stepsize}
\label{alg:generalized_polyak}
\begin{algorithmic}[1]
{
    \REQUIRE{$h:\R^d\times \mathcal{S} \to \R$, $\bx_1 \in \R^d$}
    \FOR{$t=1, \dots, T$}
    \STATE{Sample $\bxi_t$ from $D$}
    \STATE{Tranform $f(\bx,\bxi_t)$ into $h(\bx,\bxi_t)$}
    \STATE{Receive $\bg_t \in \partial h(\bx, \bxi_t)$}
    \IF{$\bg_t\neq \boldsymbol{0}$}
    \STATE{$\bx_{t+1} = \bx_t -  \eta_t h(\bx_t, \bxi_t) \bg_t $ where $\eta_t=\min\left(\frac{1}{\|\bg_t\|^2},\frac{\gamma}{h(\bx_t, \bxi_t)}\right)$}
    \ELSE
    \STATE{$\bx_{t+1} = \bx_t$}
    \ENDIF
    \ENDFOR
}
\end{algorithmic}
\end{algorithm}

Here, starting from ALI-G~\citep{pmlr-v119-berrada20a} and \texttt{SPS}{$_{\max}$}~\citep{LoizouVLLJ21} that use the idea of limiting the stepsizes, we propose a generalized Polyak stepsize algorithm. The proof is in Appendix~\ref{sec:proofs_stoch_surrogate}.
\begin{theorem}
\label{thm:generalized_polyak}
Let $h:\R^d \times \mathcal{S} \to \R_{\geq 0}$ be convex in its first argument.
Denote by $H(\bx)=\E_{\bxi \sim D}[h(\bx,\bxi)]$.
Then, setting $\eta_t= \min\left(\frac{1}{\|\bg_t\|^2}, \frac{\gamma}{h(\bx_t,\bxi_t)}\right)$ in Algorithm~\ref{alg:generalized_polyak}, we have
\begin{itemize}
\item If $h(\cdot, \bxi_t)$ is $L$-self bounded, we have
\[
\frac{1}{T}\sum_{t=1}^T \min\left(\frac{1}{2 L},\gamma\right) \E[H(\bx_t)] 
\leq \frac{\|\bx_1-\bx^\star\|^2}{T}  +  2\gamma H(\bx^\star)
\]
and
\[
\sum_{t=1}^T \min\left(\frac{1}{2 L},\gamma\right) \E[H(\bx_t)]-\gamma \sum_{t=1}^T H(\bx^\star)
\leq \frac12 \|\bx_1-\bx^\star\|^2  + \frac{1}{2}\sum_{t=1}^T \gamma^2 \E[\|\bg_t\|^2]~.
\]
\item If $h(\cdot, \bxi_t)$ is $G$-Lipschitz, then we have
\[
\frac{1}{T}\sum_{t=1}^T \E[H(\bx_t)]
\leq \frac{\|\bx_1-\bx^\star\|^2}{\gamma T} + 2 H(\bx^\star) + \frac{G\|\bx_1-\bx^\star\|}{\sqrt{T}} + G \sqrt{2 \gamma H(\bx^\star)}~.
\]
\item If $h(\cdot, \bxi)$ is $L$-self-bounded and $H(\bx)$ has $\mu$-quadratic growth, then
\[
\E\left[\|\bx_{T+1}-\bx^\star\|^2\right]
\leq \E\left[\|\bx_{1}-\bx^\star\|^2\right] a^{T+1} + b\frac{1-a^{T+1}}{1-a}H(\bx^\star),
\]
where $a=\frac{\mu}{2} \min\left(\frac{1}{2L}, \gamma\right)$ and $b=2\gamma - \min\left(\frac{1}{2L}, \gamma\right)$.
\end{itemize}
\end{theorem}

Choosing the function $h$, the above theorem covers and extends a number of results in previous papers, for example:
\begin{itemize}
\item \texttt{SPS}{$_{\max}$} \citep{LoizouVLLJ21}: $h(\bx,\bxi)= f(\bx,\bxi) - \inf_{\bx} \ f(\bx,\bxi)$, so  $H(\bx^\star)= \E[f(\bx^\star, \bxi)-\inf_{\bx} \ f(\bx,\bxi)]$.
\item \texttt{SPS}{$^\ell_{\max}$} \citep{OrvietoLJL22}: $h(\bx,\bxi)=f(\bx,\bxi)-q(\bxi)$, where $q(\bxi)$ is a lower bound to $\inf_{\bx} \ f(\bx,\bxi)$. In this case, $H(\bx^\star)= \E[f(\bx^\star, \bxi)-q(\bxi)]$. 
\item \texttt{SPS}$_+$ \citep{GarrigosGS23}: $h(\bx,\bxi)=(f(\bx,\bxi)-f(\bx^\star,\bxi))_+$. In this case, $H(\bx^\star) = 0$ so we can also safely set $\gamma=\infty$. Moreover, $H(\bx)\geq F(\bx)-F(\bx^\star)$, hence any bound on $H(\bx)$ translates to a bound on the suboptimality gap.
\end{itemize}

\begin{remark}
If $h(\bx^\star, \bxi_t)= 0$ for all $t$, then $\|\bx_{t+1}-\bx^\star\|\leq \|\bx_t-\bx^\star\|$. Hence, in this case we
only need to consider all the properties of $h$ in the bounded domain $\{\bx \in \R^d: \|\bx-\bx^\star\|\leq \|\bx_1-\bx^\star\|\}$. 
This is well-known via the subgradient projector perspective, as $\bx_t$ is guaranteed to approach the set $\bigcap_{\bxi}\{\bx: h(\bx,\bxi) = 0\}$ 
at each iteration if it is non-empty.
This property was observed in \citet{GowerGLOMS25} for \texttt{SPS}$_+$~\citep{GarrigosGS23}, but here it holds more generally.
For example, $h(\bx, \bxi) = (f(\bx,\bxi) -a)_+$, where $a\geq \sup_{\xi}f(\bx^\star,\bxi)$, would also have no neighbourhood of convergence and satisfies the assumption of the theorem.
\end{remark}

\begin{remark}
The above theorem also applies to the case where some of the $f(\cdot,\xi)$ are non-convex functions, 
while still guaranteeing the convergence to the global optimum of $F$. 
For example, consider $F(x)=0.5 f_1(x)+0.5 f_2(x)$, where $f_1=-|x|$ (non-convex) and $f_2=2 |x|$. 
We have that $F(x)=\frac{1}{2}|x|$ so $\bx^\star=0$. Now, choose $h_1(x)=\max(f_1(x)-f_1(x^\star),0)=0$ 
and $h_2(x)=\max(f_2(x)-f_2(x^\star),0)=2 |x|$. Hence, the hypotheses of the theorem are verified. 
Moreover, $H(x)=0.5 h_1(x)+0.5 h_2(x)\geq F(x)$ and $H(x^\star)=0$, so the theorem implies a convergence rate for the minimization of $F(x)-F(x^\star)$ by using \texttt{SPS}$_+$.
\end{remark}

\begin{remark}
In the proof of Theorem~\ref{thm:generalized_polyak}, if one stops before taking expectations, one obtains a \emph{regret guarantee} on a sequence of arbitrary losses $h(\bx, \bxi_t)$. Such regret scales as the sum of the loss in $\bx^\star$. This is exactly the $L^\star$ bound that we mentioned in Section~\ref{sec:rel}. Indeed, this kind of updates and guarantees were already obtained in the online learning literature for the special case of linear predictors by the Passive-Aggressive family of algorithms~\citep{CrammerDKSSS06}.
\end{remark}

Besides covering a number of previous algorithmic variants, we also extend the previous known guarantees. In particular, \citet{LoizouVLLJ21} only studied \SPS~ in the non-smooth setting but did not include \SPSMAX.\footnote{Although \citet{LoizouVLLJ21} state that \SPS{} analysis can be readily extended to \SPSMAX{} this does not seem to be the case due to the non-convexity of the min function, as demonstrated by our different proof technique in Appendix~\ref{sec:proofs_stoch_surrogate}.} 
Theorem~\ref{thm:generalized_polyak} shows for the first time that \SPSMAX~ is adaptive to the entire range of upper curvature of the function, from Lipschitz to smooth functions.
In Appendix~\ref{sec:more_results} we also show additional results.
Moreover, the second result in the smooth case is new, and it allows to recover the SGD guarantee on $H$ when $\gamma$ is sufficiently small.
For example, we include a precise statement for \texttt{SPS}$_+$ when $f(\bx,\bxi)$ is $L$-self-bounded, that recovers the guarantee in \citet[Corollary 2.3]{GowerGLOMS25}.

\section{Neighbourhood of Convergence and Instability of the Polyak Stepsize}
\label{sec:negative}

In Section~\ref{sec:gen-polyak} we demonstrate that a generalized version of the Polyak stepsize and existing variants can be viewed as GD on a function with approximate local curvature, with convergence to a neighbourhood of the optimal solution.
This neighbourhood of convergence appears in our analysis just like in all existing variants, therefore \emph{suggesting} it is unavoidable if
\begin{align}\label{eq:positive}
    H(\bx) = \E_{\bxi \sim D}[h(\bx,\bxi)] > 0 \, \mbox{ for all } \bx~.
\end{align}
In this section, we demonstrate that this neighbourhood of convergence is not an artifact of the analysis and indeed cannot be avoided in general.
We also show that the positivity condition~\eqref{eq:positive} can
fundamentally change the dynamics of Algorithm~\ref{alg:generalized_polyak}, even in the deterministic setting, thus posing a challenge that is not just associated with interpolation. 

Condition~\eqref{eq:positive} occurs with \SPS~\citep{LoizouVLLJ21} without interpolation, or in the deterministic setting when the optimal value is underestimated,  $h(\bx) = f(\bx) - c$ where $f^\star > c$.
Convergence under this condition was first studied in the deterministic case in Polyak's original paper~\citep{Polyak69}, where it is shown that if $\inf_x h(x) = h^\star > 0$ then $\lim_{t \to \infty}\min_{1 \leq s \leq t}h(\bx_t) -h^\star \leq h^\star$.
That is, the best iterate eventually enters a neighbourhood of the minima where the size of the neighbourhood is dependent on how much $h^\star$ is underestimated by 0. 
In the stochastic case, convergence of the average iterate to a neighbourhood when understimating the minimum was also studied by~\citet{OrvietoLJL22} under \texttt{SPS}{$^\ell_{\max}$}.
However, we demonstrate the consequence of condition~\eqref{eq:positive} is far greater than existing results suggest, with instability of fixed points, potential cycles, lower bounds in the sub-optimality gap, and lack of convergence regardless of initialization.

\noindent\textbf{Deterministic Setting.}
We first demonstrate that in the deterministic setting, for different classes of $h$, if $h^\star = \min_x h(\bx) > 0$, then the fixed points of 
\begin{align}\label{eq:det-gen-polyak}
    \bx_{t+1} 
    = T(\bx_t) 
    :=\begin{cases}
        \bx_t -\frac{h(\bx_t)}{\|\bg_t\|^2}\bg_t, & \mbox{ if } \bg_t \in \partial h(\bx_t),\, 0 \notin \partial h(\bx_t),\\
        \bx_t, &\mbox{ otherwise}
    \end{cases}
\end{align}
are unstable. Intuitively, this can be explained via our surrogate function view: In fact, denoting the local curvature constant of the surrogate $\frac{1}{2}(h(\bx)-h^\star)^2$ around $\bx_t$ as $\lambda_t$, we see that the stepsize $\eta_t$ in update \eqref{eq:det-gen-polyak} can be equivalently written as
\begin{align}\label{eq:step-unstable}
    \eta_t = \left(\frac{h(\bx_t)}{h(\bx_t)-h^\star}\right)\frac{1}{\lambda_t}.
\end{align}
If $h$ is Lipschitz or self-bounded then $\eta_t \to +\infty$ as $\bx_t \to \bx^\star$.
Therefore, if $h$ possesses curvature, then $\bx_{t+1}$ may move further away from $\bx^\star$ within a neighborhood of $\bx^\star$.
Indeed, in Proposition~\ref{thm:local-unstable} we show that for all self-bounded functions with a quadratic growth condition, the fixed point of $T$ in ~\eqref{eq:det-gen-polyak} is unstable. A similar result can also be shown if $h$ is $L$-Lipschitz and has a sharp mininum (see Proposition~\ref{thm:local-unstable-nonsmooth} in the Appendix).

\begin{proposition}[Unstable fixed point]
\label{thm:local-unstable}
    Suppose $h $ is convex, strictly positive, $L$-self-bounded, and satisfies the quadratic growth condition $h(\bx)- h^\star \geq \frac{\mu}{2}\|\bx-\bx^\star\|^2$, where $\bx^\star = \arg\min_{\bx} h(\bx)$ is the only fixed point of $T$, defined in~\eqref{eq:det-gen-polyak}. Then, for any point $\bx \in S= \{\by: \by \neq \bx^\star, h (\by) - h^\star < h^\star\frac{\mu}{8L -\mu}\}$ we have
    \[
    \|T(\bx)-\bx^\star\| > \|\bx-\bx^\star\|~. 
    \]
\end{proposition}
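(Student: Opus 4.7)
The plan is to first reduce the desired inequality to a statement involving only $h$, its subgradient, and $h^\star$, and then bound the resulting inner product using self-boundedness and the quadratic growth condition.

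For $\bx\in S$ we have $\bx\neq\bx^\star$, and by convexity together with the uniqueness of the minimizer we must have $\bzero\notin\partial h(\bx)$; hence $\bg\neq\bzero$ and the update is given by the first branch of~\eqref{eq:det-gen-polyak}, $T(\bx)=\bx-(h(\bx)/\|\bg\|^2)\bg$. A direct expansion gives
\[
\|T(\bx)-\bx^\star\|^2-\|\bx-\bx^\star\|^2
=\frac{h(\bx)}{\|\bg\|^2}\bigl(h(\bx)-2\langle \bg,\bx-\bx^\star\rangle\bigr).
\]
Since $h(\bx)>0$ by strict positivity and $\|\bg\|^2>0$, the claim reduces to proving that $h(\bx)>2\langle \bg,\bx-\bx^\star\rangle$.

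To bound the inner product I would apply Cauchy--Schwarz, $\langle \bg,\bx-\bx^\star\rangle\le\|\bg\|\|\bx-\bx^\star\|$, and then combine the self-boundedness $\|\bg\|^2\le 2L(h(\bx)-h^\star)$ with the quadratic growth $\|\bx-\bx^\star\|^2\le 2(h(\bx)-h^\star)/\mu$, using Young's inequality $2ab\le a^2/\alpha+\alpha b^2$ with a suitable $\alpha$ (e.g., $\alpha=2L$). After collecting terms this produces a bound of the form $2\langle \bg,\bx-\bx^\star\rangle\le (8L/\mu)(h(\bx)-h^\star)$. The defining inequality of $S$ rewrites exactly as
\[
h(\bx)-h^\star<\frac{\mu h^\star}{8L-\mu}
\quad\Longleftrightarrow\quad
h(\bx)>\frac{8L}{\mu}(h(\bx)-h^\star),
\]
so chaining these two inequalities gives $h(\bx)>2\langle \bg,\bx-\bx^\star\rangle$, which is the desired inequality.

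The main obstacle is matching the stated constant $8L-\mu$ exactly. The sharpest Cauchy--Schwarz estimate available from the two assumptions is the square-root bound $4\sqrt{L/\mu}(h(\bx)-h^\star)$, which would produce a different (square-root) denominator in the statement. Recovering the cleaner form $8L-\mu$ requires a deliberate weakening via AM--GM (for instance $2\sqrt{L/\mu}\le 4L/\mu$ whenever $\mu\le 4L$) or an asymmetric choice of the Young parameter $\alpha$ that does not optimize the inner product bound but is tuned to produce the target coefficient $8L/\mu$. One should also observe that the assumptions implicitly require $\mu<8L$ for the set $S$ to be non-empty, and the strict positivity of $h$ ensures $h^\star>0$ so that $S$ is a genuine open neighbourhood of $\bx^\star$.
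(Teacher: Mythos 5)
Your proof is correct and follows essentially the same route as the paper: expand the square, apply Cauchy--Schwarz, and combine self-boundedness, quadratic growth, and the rewriting of membership in $S$ as $h(\bx) > \frac{8L}{\mu}(h(\bx)-h^\star)$. The constant-matching step you flag is not a real obstacle: the paper sidesteps it by chaining $\frac{1}{2L}\|\bg\|^2 \le h(\bx)-h^\star \le \|\bg\|\,\|\bx-\bx^\star\|$ to get $\|\bg\| \le 2L\|\bx-\bx^\star\|$ directly, while your AM--GM weakening is equally valid since the two curvature assumptions already force $\mu \le 4L$ at any non-minimizer.
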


Note that this reinforces the need to clip the stepsize as proposed in ALI-G and \SPSMAX.
However, clipping will not remove this behaviour unless the maximum value is taken to be small enough.
In Proposition~\ref{thm:bounded-unstable} we show there is always a subregion where the stepsize is bounded, and this subregion can be made arbitrarily large within the unstable region;
therefore, if the clipped value is too large instability is unavoidable.

The importance of $h > 0$ in update~\eqref{eq:det-gen-polyak} has also been studied in \citet{bauschke2018subgradient}, where they demonstrate with examples that $T$ can fail to be quasi-firmly non-expansive if $h^\star > 0$.
Propositions ~\ref{thm:local-unstable}, and ~\ref{thm:local-unstable-nonsmooth} provide extra insight on this phenomenon as they automatically prove $T$ cannot be quasi-firmly non-expansive and therefore we have the following remark.
\begin{remark}
If $h >0$, and either of the following conditions hold:
\begin{itemize}
    \item $h$ is convex, self-bounded, and satisfies the quadratic growth condition,
    \item $h$ is convex, Lipschitz, and has a sharp minimum,
\end{itemize}
then $T$ from \eqref{eq:det-gen-polyak} is not quasi-firmly non-expansive.
\end{remark}

While Propositions~\ref{thm:local-unstable}~and~\ref{thm:local-unstable-nonsmooth} establish that minima can be unstable, this property may not fully describe the dynamics of update~\eqref{eq:det-gen-polyak}. In fact,
instability can admit convergence in the average iterate or last iterate if the local critical neighborhood is skipped.
So, in Proposition~\ref{thm:cycle} we provide an example of a function $h$, which satisfies the assumptions of Proposition~\ref{thm:local-unstable}, where the iterates \emph{cycle and never reach the minimum in best iterate or on average}.  

\begin{proposition}[Cycling and failure to converge]\label{thm:cycle}
There exists a strictly positive smooth and strongly convex function $h$, and an initial point $\bx_1$ such that iterates from update~\eqref{eq:det-gen-polyak} cycle and satisfy the inequality $h(\frac{1}{t}\sum_{i=1}^t \bx_i)-h^\star\geq \delta>0$ for all $t$.
\end{proposition}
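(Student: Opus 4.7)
The plan is to construct a one-dimensional piecewise-quadratic $h$ whose Polyak-type update admits an \emph{asymmetric} period-two cycle. The natural first attempt $h(x)=\tfrac{1}{2}x^{2}+c$ yields the cycle $\pm\sqrt{2c/3}$, but its iterates average exactly to the minimizer $0$, so it fails to prove the claim. I would therefore break the symmetry by taking
\[
h(x)=\begin{cases}\tfrac{\alpha}{2}x^{2}+c,&x\geq 0,\\[2pt]\tfrac{\beta}{2}x^{2}+c,&x<0,\end{cases}
\]
with $0<\beta<\alpha$ and $c>0$. A direct check shows $h$ is $\beta$-strongly convex, $\alpha$-smooth (its gradient is continuous and piecewise linear, hence Lipschitz with constant $\alpha$), strictly positive, with unique minimizer $x^{\star}=0$ and $h^{\star}=c$.

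I would then substitute into~\eqref{eq:det-gen-polyak} to obtain $T(x)=x/2-c/(\alpha x)$ for $x>0$ and $T(x)=x/2-c/(\beta x)$ for $x<0$, and look for a period-two orbit $a>0>b$. Setting $k:=a/|b|>0$, the two equations $T(a)=b$ and $T(b)=a$ reduce to the single relation $k(k+2)/(2k+1)=\beta/\alpha$ together with the normalization $|b|^{2}(k+\tfrac{1}{2})=c/\beta$. The left-hand side $k(k+2)/(2k+1)$ is strictly increasing from $0$ to $1$ on $[0,1]$, so this has a unique root in $(0,1)$ for every $\beta<\alpha$. The specific choice $\alpha=2,\beta=1,c=1$ gives $k=(\sqrt{3}-1)/2$, and one verifies by plugging back that $T(a)=b$ and $T(b)=a$, so starting from $x_{1}=a$ the iterates cycle as $a,b,a,b,\dots$ forever.

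The final step is to analyze the Ces\`aro averages $\bar x_{t}:=\tfrac{1}{t}\sum_{i=1}^{t}x_{i}$. For even $t=2m$ they equal $(a+b)/2$, which is nonzero because $k\neq 1$. For odd $t=2m+1$ they equal $\big((m+1)a+mb\big)/(2m+1)$, which vanishes iff $m/(m+1)=a/|b|=k$; with $k=(\sqrt{3}-1)/2$ irrational, this cannot happen for any integer $m\geq 0$. Combined with the convergence $\bar x_{2m+1}\to (a+b)/2\neq 0$, this yields $\inf_{t\geq 1}|\bar x_{t}|\geq\delta_{0}>0$, and strong convexity of $h$ then gives $h(\bar x_{t})-h^{\star}\geq\tfrac{\beta}{2}\delta_{0}^{2}=:\delta>0$, as required.

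The one step that genuinely requires care—rather than routine computation—is this last one: ruling out that some partial average $\bar x_{t}$ accidentally lands on the minimizer. It amounts to checking $k\neq m/(m+1)$ for every $m\in\mathbb{N}$, which is precluded by the irrationality of $k$ at our specific parameters; alternatively, one can note that the set of parameter triples $(\alpha,\beta,c)$ for which this fails has Lebesgue measure zero, so a generic choice works.
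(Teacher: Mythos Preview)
Your construction is correct and genuinely different from the paper's. The paper takes the \emph{symmetric} quadratic $h(x)=x^{2}+1$ and exploits the cotangent double-angle identity: writing $x_{1}=\cot\theta$, the update becomes $x_{t}=\cot(2^{t}\theta)$, and choosing $\theta=\pi/7$ produces a period-three cycle $\cot(\pi/7),\cot(2\pi/7),\cot(4\pi/7)$. The claim about the running averages is then dispatched by a numerical check. Your route instead breaks the symmetry at the level of $h$, which lets you get away with a period-two cycle and, more importantly, gives you an \emph{analytic} argument (the irrationality of $k=(\sqrt{3}-1)/2$) that no partial average hits the minimizer---this is cleaner than the paper's numerical verification. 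The paper's example, on the other hand, is globally $C^{\infty}$ rather than merely $C^{1}$; your piecewise quadratic is still $L$-smooth in the paper's sense (Lipschitz gradient), so it satisfies the hypotheses, but if one wanted a $C^{\infty}$ witness the trigonometric construction delivers that directly.
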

Note that since the cycle in Proposition~\ref{thm:cycle} consists of a finite number of points, clipping will not necessarily remove this behaviour (e.g. if the clipped value is taken to be larger than any of the seen stepsizes).
In Proposition~\ref{thm:cycle}, a specific initialization was chosen to construct a cycle that would not converge. However, in Proposition~\ref{thm:null-example} we show that for 1-d quadratics the lack of convergence is true for all initializations and values of $h^\star$ \emph{up to a set of measure zero}. 

\begin{figure}
    \centering
    \includegraphics[width=0.75\linewidth]{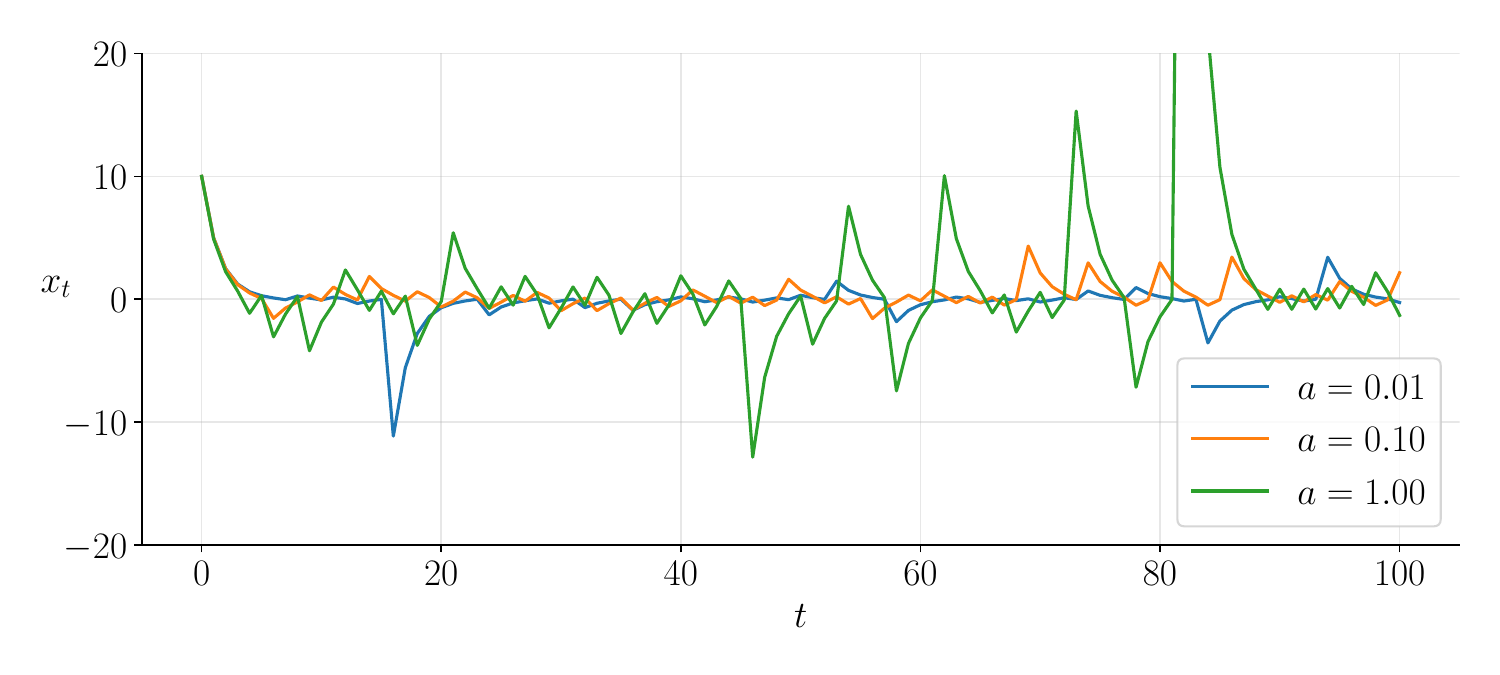}
    \caption{\small Trajectories under $T$ \eqref{eq:det-gen-polyak}
    for $h(x) = \frac{x^2}{2} + a$ with an unstable fixed point at $x^\star = 0$.
    Lack of convergence is observed for different values of $a$ as predicted by Proposition~\ref{thm:null-example}.
    }
    \vspace{-1em}
\end{figure}

\begin{proposition}[The set of good initializations can have measure zero]
\label{thm:null-example}
    Let $h: \mathbb{R}\to \mathbb{R}$, defined as $h(x) = \frac{x^2}{2} + a$ for $a>0$, where $x_{t+1} = x_t-\frac{h(x_t)}{\|\nabla h(x_t)\|^2}\nabla h(x_t)$, and $\bx_1$ is randomly initialized. Then, $P\{\lim_{t\to \infty}x_{t} = x^\star\} =0$. In other words, the set of initializations that can converge to the optimal solution has measure zero.
\end{proposition}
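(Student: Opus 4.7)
The plan is to reduce the problem to analyzing a simple one-dimensional rational map. With $h(x) = \frac{x^2}{2} + a$ and $\nabla h(x) = x$, the iteration becomes
\[
x_{t+1} = x_t - \frac{x_t^2/2 + a}{x_t^2}\, x_t = \frac{x_t}{2} - \frac{a}{x_t} \;=:\; T(x_t),
\]
which is well-defined whenever $x_t \neq 0$. At $x_t = 0$ the gradient vanishes, and by the convention used throughout the paper I set $T(0) = 0$; note $0$ is the unique minimizer $x^\star$ and $h^\star = a > 0$.

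Next I would show that convergence to $x^\star = 0$ is only possible via \emph{exact} termination at $0$ in finitely many steps. Two observations suffice. First, any real limit $L$ of $(x_t)$ must be a fixed point of $T$; solving $L/2 - a/L = L$ yields $L^2 = -2a$, which has no real solution, so the only candidate limit (after accounting for the extended value $T(0)=0$) is $L = 0$. Second, if $x_t \to 0$ with $x_t \neq 0$ for all $t$, then
\[
|T(x_t)| \;\geq\; \frac{a}{|x_t|} - \frac{|x_t|}{2} \;\longrightarrow\; \infty,
\]
contradicting $x_{t+1} \to 0$. Hence $x_t \to 0$ forces $x_N = 0$ for some finite $N$.

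Finally, I would count the set of such initializations. Preimages under $T$ of a value $y$ are the roots of $x^2 - 2yx - 2a = 0$, of which there are at most two (in fact exactly two since $y^2 + 2a > 0$, and neither root equals $0$). Iterating, $|T^{-N}(\{0\})| \leq 2^N$, so
\[
\{x_1 : x_N = 0 \text{ for some } N \geq 0\} \;=\; \bigcup_{N \geq 0} T^{-N}(\{0\})
\]
is a countable union of finite sets, hence Lebesgue-null. Under any initialization distribution that is absolutely continuous with respect to Lebesgue measure, the event $\{\lim_t x_t = x^\star\}$ therefore has probability zero.

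The only real subtlety is the second step: one must use the blow-up of $T$ at the singularity to rule out the possibility of non-terminating asymptotic convergence to $0$. Everything else is a direct fixed-point computation plus an elementary cardinality argument, and the same reasoning would generalize to any strictly positive quadratic in a single variable.
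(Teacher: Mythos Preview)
Your proof is correct and follows the same two-step skeleton as the paper: first argue that $x_t\to x^\star$ is only possible via exact finite-time arrival at $0$, then show the set of such initial points is a countable union of finite preimage sets, hence Lebesgue-null. The difference is in how the first step is carried out. The paper appeals to the general instability result (Proposition~\ref{thm:local-unstable}) to get a neighbourhood of $x^\star$ in which $\|T(x)-x^\star\|>\|x-x^\star\|$, and then invokes the abstract Lemma~\ref{thm:null-convergence} to conclude. You instead exploit the explicit form of $T$ and the blow-up $|T(x)|\geq a/|x|-|x|/2\to\infty$ near the singularity, which directly rules out non-terminating convergence without any external lemma. Your route is more elementary and fully self-contained for this quadratic example; the paper's route is more modular and would transfer verbatim to any map satisfying the hypotheses of Lemma~\ref{thm:null-convergence}. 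One small point: when counting preimages via the quadratic $x^2-2yx-2a=0$ you should in principle also include $x=0$ in $T^{-1}(\{0\})$ (since $T(0)=0$ by convention), but this does not affect finiteness and hence the conclusion. Your explicit mention of the absolute-continuity assumption on the initialization is also a useful clarification that the paper leaves implicit.
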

\begin{proof}
    Note $h$ is 1-smooth and 1-strongly convex and therefore satisfies the conditions of Proposition \ref{thm:local-unstable} with $\mu=L=1$ and an unstable unique fixed point.
    Let $T$ be such that $x_{t+1}=T(x_t)$.
    $T(x) = x\left(\frac{1}{2}-\frac{a}{x^2}\right)$ if $x \neq 0$ and $0$ otherwise. With inverse $T^{-1}(S)=\{x \pm \sqrt{x^2+2a}: x \in S\}$. Therefore, $T^{-k}(\{x^\star\})$ has at most $2^k$ points which has measure zero for all $k$. 
    By Lemma~\ref{thm:null-convergence} in Appendix~\ref{sec:proof_instability}, the result follows.
\end{proof}
\begin{remark}
    Note that, by Lemma~\ref{thm:null-convergence}, Proposition~\ref{thm:null-example} can be extended much more generally if $T$ from \eqref{eq:det-gen-polyak} is shown to satisfy the Lusin ($N^{-1}$) property (see Definition~\ref{def:lusin})~\citep[Definition 4.12]{hencl2014lectures}. 
\end{remark}

In the stochastic case with \SPS, condition~\eqref{eq:positive} is due to lack of interpolation, and \citet{OrvietoLJL22} show that it can change the expected fixed point.
In contrast, in the deterministic setting we have shown lack of convergence despite the fixed point being $\bx^\star$.
Therefore the issue here stems from the instability of the method due to the underestimation of $h^\star$ and not the bias of the expected update.

\noindent\textbf{Stochastic Setting.}
In the stochastic setting, the positivity condition~\eqref{eq:positive}
can occur despite $\min_{\bx}h(\bx,\bxi) =0$, such as in \SPS~($h(\bx,\bxi) = f(\bx)- \min_{\bx}f(\bx,\bxi)$) without interpolation.
\citet{OrvietoLJL22} demonstrate that without interpolation \SPS~can fail to converge in
a 1-d quadratic and has an expected fixed point different than
$\min_{\bx}F(\bx) = \min_{\bx}\E_{\bxi \sim D}[f(\bx,\bxi)]$.
Similarly to the deterministic setting, we can show that SPS can have a random walk between a finite number of points.

\begin{proposition}[Failure to converge]
There exist $f_1$ and $f_2$ quadratic 1-d functions and a starting point $x_1$ such that SPS on $F(x)=0.5(f_1(x)+f_2(x))$ satisfies
\[
\E[F(x_t)]- \min_x \ F(x)\geq 2/3, \ \forall t~.
\]
\end{proposition}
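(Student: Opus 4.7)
The plan is to construct an explicit pair of one-dimensional quadratics whose individual minima do \emph{not} coincide with $\argmin F$, and then to show that the SPS dynamics, when started appropriately, maintain a strictly positive expected suboptimality forever via a simple one-line recursion on $\E[x_t^2]$.

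Concretely, I would take
\[
f_1(x)=(x-\sqrt{2})^2, \qquad f_2(x)=(x+\sqrt{2})^2,
\]
so that $\min_x f_i(x)=0$ for $i=1,2$ and hence the SPS variant under consideration (which uses $h(\bx,\bxi)=f(\bx,\bxi)-\inf_{\bx}f(\bx,\bxi)$) simply has $h_i=f_i$. Then
\[
F(x)=\tfrac{1}{2}\bigl(f_1(x)+f_2(x)\bigr)=x^2+2,\qquad \min_{x} F(x)=2 \text{ at } x^\star=0,
\]
so the suboptimality gap reduces to $F(x_t)-F^\star=x_t^2$.

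The SPS update with $f_i$ is a short calculation: since $f_i(x)/\|\nabla f_i(x)\|^2=1/4$, one gets
\[
x_{t+1}=x_t-\tfrac{1}{2}(x_t-c_{i_t})=\tfrac{1}{2}(x_t+c_{i_t}),
\]
where $c_{i_t}\in\{+\sqrt{2},-\sqrt{2}\}$ is picked uniformly. Writing $c_{i_t}=\sqrt{2}\,\epsilon_t$ with $\epsilon_t$ a Rademacher random variable independent of $x_t$, squaring and taking conditional expectations gives
\[
\E\!\bigl[x_{t+1}^2 \mid x_t\bigr]
=\tfrac{1}{4}\bigl(x_t^2+2\sqrt{2}\,x_t\E[\epsilon_t]+2\bigr)
=\tfrac{x_t^2+2}{4}.
\]
Hence the scalar sequence $v_t:=\E[x_t^2]$ satisfies the deterministic recursion $v_{t+1}=(v_t+2)/4$, whose unique fixed point is $v^\star=2/3$.

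Finally, pick any initial point $x_1$ with $x_1^2\geq 2/3$ (e.g.\ $x_1=\sqrt{2/3}$). A trivial induction shows $v_t\geq 2/3$ for all $t$, since $v_t\geq 2/3$ implies $v_{t+1}=(v_t+2)/4\geq (2/3+2)/4=2/3$. Therefore
\[
\E[F(x_t)]-\min_x F(x)=\E[x_t^2]=v_t\geq \tfrac{2}{3},\qquad \forall t,
\]
which is the claim. There is no real obstacle in this proof; the only thing to be careful about is picking the constant $\sqrt{2}$ in the right place so that the fixed point of $v\mapsto(v+2)/4$ lands exactly at $2/3$, and then choosing an initialization on the ``upper'' side of that fixed point so that monotonicity of the affine contraction gives a uniform-in-$t$ lower bound rather than just an asymptotic one.
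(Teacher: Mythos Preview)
Your proof is correct and takes a genuinely different route from the paper's. The paper picks asymmetric quadratics $f_1(x)=x^2+2x+5$, $f_2(x)=2x^2-4x+10$ and the special initialization $x_1=1$, engineered so that the SPS iterates stay in the finite set $\{1,-1\}$; the bound then follows by direct enumeration, with the constant $2/3$ coming from $F(1)-\min F=2/3$ at $t=1$. Your construction instead uses the symmetric pair $(x\mp\sqrt{2})^2$ and tracks the second moment via the scalar recursion $v_{t+1}=(v_t+2)/4$; the constant $2/3$ arises naturally as the fixed point of this affine map, and the argument works for \emph{any} initialization with $x_1^2\ge 2/3$ rather than a single hand-picked point. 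Your approach is also robust to implementation details: the Polyak stepsize is identically $1/4$, so any clipping level $\gamma\ge 1/4$ leaves the dynamics unchanged, and since $\min f_i=0$ there is no ambiguity about what is subtracted in the SPS numerator. Both proofs are elementary; yours trades a finite-state verification for a one-line moment recursion and yields a slightly more flexible statement.
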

\begin{proof}
Let $f_1=x^2+2x+5$ and $f_2(x)=2x^2-4x+10$.
Let's start from $x_1=1$ where $F(x_1)=8$. If we draw $f_1$, $x_2=-1$, while if we draw $f_2$ then $x_2=1$ because $f_2'(1)=0$. Hence, $\E[F(x_2)]=0.5\frac{f_1(1)+f_2(1)}{2}+0.5\frac{f_1(-1)+f_2(-1)}{2}=9$.
Iterating, we have that $x_3$ has equal probability to be equal to 1 and -1. Hence, again we have $\E[F(x_3)]=9$. So, we have that this holds for any $t$. Moreover, we have that $\min_x F(x)=44/6$.
\end{proof}

\section{Discussion and Limitations}
\label{sec:disc}
We have shown that the design, properties, and failure of the (variants) of the Polyak stepsize can be easily derived through the lens of the minimization of a surrogate objective function. 
This framework also provides a new and natural explanation on the adaptivity of the stepsize via the local curvature of the surrogate.
We believe this framework has the promise to design new variants, by simply designing surrogate functions with the required properties.
Furthermore, with our perspective we have provided new insight on the challenge of controlling neighbourhoods of convergence that often 
appear in variants of the Polyak stepsize. We demonstrate that this neighbourhood is unavoidable and a fundamental issue causing instability. Moreover, we show that this issue is not due to the lack of interpolation, as commonly believed, but instead because the minimum of the surrogate loss is not zero more generally.

The limitations of our framework include the assumption of convex $h$ in the generalized surrogate that must be assumed apriori. It is unclear if this framework can be extended to the more general case of noncovex surrogate functions.
The class of such surrogates that admit fast rates and tight neighbourhoods of convergence remains an open question that we leave to future work.

\section*{Acknowledgments}

We acknowledge the use of Gemini 2.5 in developing the proof of Proposition~\ref{prop:cycle}.
We also thank Mehdi Inane Ahmed for helpful discussions. 
Ryan D'Orazio's work is funded by Ioannis Mitliagkas' CIFAR chair.

 \bibliographystyle{plainnat_nourl}
\bibliography{learning}

\newpage
\section*{NeurIPS Paper Checklist}

\begin{enumerate}

\item {\bf Claims}
    \item[] Question: Do the main claims made in the abstract and introduction accurately reflect the paper's contributions and scope?
    \item[] Answer: \answerYes{} 
    \item[] Justification: To the best of our knowledge our framework in Section \ref{sec:gen-polyak} that both generalizes and analyzes the Polyak stepsize is novel. Additionally, we have included novel negative results in Section ~\ref{sec:negative}.
    \item[] Guidelines:
    \begin{itemize}
        \item The answer NA means that the abstract and introduction do not include the claims made in the paper.
        \item The abstract and/or introduction should clearly state the claims made, including the contributions made in the paper and important assumptions and limitations. A No or NA answer to this question will not be perceived well by the reviewers. 
        \item The claims made should match theoretical and experimental results, and reflect how much the results can be expected to generalize to other settings. 
        \item It is fine to include aspirational goals as motivation as long as it is clear that these goals are not attained by the paper. 
    \end{itemize}

\item {\bf Limitations}
    \item[] Question: Does the paper discuss the limitations of the work performed by the authors?
    \item[] Answer: \answerYes{} 
    \item[] Justification: We discuss the limitations of our framework and assumptions in Section \ref{sec:disc}.
    \item[] Guidelines:
    \begin{itemize}
        \item The answer NA means that the paper has no limitation while the answer No means that the paper has limitations, but those are not discussed in the paper. 
        \item The authors are encouraged to create a separate "Limitations" section in their paper.
        \item The paper should point out any strong assumptions and how robust the results are to violations of these assumptions (e.g., independence assumptions, noiseless settings, model well-specification, asymptotic approximations only holding locally). The authors should reflect on how these assumptions might be violated in practice and what the implications would be.
        \item The authors should reflect on the scope of the claims made, e.g., if the approach was only tested on a few datasets or with a few runs. In general, empirical results often depend on implicit assumptions, which should be articulated.
        \item The authors should reflect on the factors that influence the performance of the approach. For example, a facial recognition algorithm may perform poorly when image resolution is low or images are taken in low lighting. Or a speech-to-text system might not be used reliably to provide closed captions for online lectures because it fails to handle technical jargon.
        \item The authors should discuss the computational efficiency of the proposed algorithms and how they scale with dataset size.
        \item If applicable, the authors should discuss possible limitations of their approach to address problems of privacy and fairness.
        \item While the authors might fear that complete honesty about limitations might be used by reviewers as grounds for rejection, a worse outcome might be that reviewers discover limitations that aren't acknowledged in the paper. The authors should use their best judgment and recognize that individual actions in favor of transparency play an important role in developing norms that preserve the integrity of the community. Reviewers will be specifically instructed to not penalize honesty concerning limitations.
    \end{itemize}

\item {\bf Theory assumptions and proofs}
    \item[] Question: For each theoretical result, does the paper provide the full set of assumptions and a complete (and correct) proof?
    \item[] Answer: \answerYes{} 
    \item[] Justification: All theoretical statements outline assumptions used in their respective proof. Complete proofs are either presented directly in the main body or in the appendix. We do not provide proof sketches for proofs not in the main body but do provide intuition and discussion in such cases.
    \item[] Guidelines:
    \begin{itemize}
        \item The answer NA means that the paper does not include theoretical results. 
        \item All the theorems, formulas, and proofs in the paper should be numbered and cross-referenced.
        \item All assumptions should be clearly stated or referenced in the statement of any theorems.
        \item The proofs can either appear in the main paper or the supplemental material, but if they appear in the supplemental material, the authors are encouraged to provide a short proof sketch to provide intuition. 
        \item Inversely, any informal proof provided in the core of the paper should be complemented by formal proofs provided in appendix or supplemental material.
        \item Theorems and Lemmas that the proof relies upon should be properly referenced. 
    \end{itemize}

    \item {\bf Experimental result reproducibility}
    \item[] Question: Does the paper fully disclose all the information needed to reproduce the main experimental results of the paper to the extent that it affects the main claims and/or conclusions of the paper (regardless of whether the code and data are provided or not)?
    \item[] Answer: \answerNA{} 
    \item[] Justification: We do not include any experiments in our paper.
    \item[] Guidelines:
    \begin{itemize}
        \item The answer NA means that the paper does not include experiments.
        \item If the paper includes experiments, a No answer to this question will not be perceived well by the reviewers: Making the paper reproducible is important, regardless of whether the code and data are provided or not.
        \item If the contribution is a dataset and/or model, the authors should describe the steps taken to make their results reproducible or verifiable. 
        \item Depending on the contribution, reproducibility can be accomplished in various ways. For example, if the contribution is a novel architecture, describing the architecture fully might suffice, or if the contribution is a specific model and empirical evaluation, it may be necessary to either make it possible for others to replicate the model with the same dataset, or provide access to the model. In general. releasing code and data is often one good way to accomplish this, but reproducibility can also be provided via detailed instructions for how to replicate the results, access to a hosted model (e.g., in the case of a large language model), releasing of a model checkpoint, or other means that are appropriate to the research performed.
        \item While NeurIPS does not require releasing code, the conference does require all submissions to provide some reasonable avenue for reproducibility, which may depend on the nature of the contribution. For example
        \begin{enumerate}
            \item If the contribution is primarily a new algorithm, the paper should make it clear how to reproduce that algorithm.
            \item If the contribution is primarily a new model architecture, the paper should describe the architecture clearly and fully.
            \item If the contribution is a new model (e.g., a large language model), then there should either be a way to access this model for reproducing the results or a way to reproduce the model (e.g., with an open-source dataset or instructions for how to construct the dataset).
            \item We recognize that reproducibility may be tricky in some cases, in which case authors are welcome to describe the particular way they provide for reproducibility. In the case of closed-source models, it may be that access to the model is limited in some way (e.g., to registered users), but it should be possible for other researchers to have some path to reproducing or verifying the results.
        \end{enumerate}
    \end{itemize}

\item {\bf Open access to data and code}
    \item[] Question: Does the paper provide open access to the data and code, with sufficient instructions to faithfully reproduce the main experimental results, as described in supplemental material?
    \item[] Answer: \answerNA{} 
    \item[] Justification: We do not include any experiments in our paper.
    \item[] Guidelines:
    \begin{itemize}
        \item The answer NA means that paper does not include experiments requiring code.
        \item Please see the NeurIPS code and data submission guidelines (\url{https://nips.cc/public/guides/CodeSubmissionPolicy}) for more details.
        \item While we encourage the release of code and data, we understand that this might not be possible, so “No” is an acceptable answer. Papers cannot be rejected simply for not including code, unless this is central to the contribution (e.g., for a new open-source benchmark).
        \item The instructions should contain the exact command and environment needed to run to reproduce the results. See the NeurIPS code and data submission guidelines (\url{https://nips.cc/public/guides/CodeSubmissionPolicy}) for more details.
        \item The authors should provide instructions on data access and preparation, including how to access the raw data, preprocessed data, intermediate data, and generated data, etc.
        \item The authors should provide scripts to reproduce all experimental results for the new proposed method and baselines. If only a subset of experiments are reproducible, they should state which ones are omitted from the script and why.
        \item At submission time, to preserve anonymity, the authors should release anonymized versions (if applicable).
        \item Providing as much information as possible in supplemental material (appended to the paper) is recommended, but including URLs to data and code is permitted.
    \end{itemize}

\item {\bf Experimental setting/details}
    \item[] Question: Does the paper specify all the training and test details (e.g., data splits, hyperparameters, how they were chosen, type of optimizer, etc.) necessary to understand the results?
    \item[] Answer: \answerNA{} 
    \item[] Justification: We do not include any experiments in our paper.
    \item[] Guidelines:
    \begin{itemize}
        \item The answer NA means that the paper does not include experiments.
        \item The experimental setting should be presented in the core of the paper to a level of detail that is necessary to appreciate the results and make sense of them.
        \item The full details can be provided either with the code, in appendix, or as supplemental material.
    \end{itemize}

\item {\bf Experiment statistical significance}
    \item[] Question: Does the paper report error bars suitably and correctly defined or other appropriate information about the statistical significance of the experiments?
    \item[] Answer: \answerNA{} 
    \item[] Justification: We do not include any experiments in our paper.
    \item[] Guidelines:
    \begin{itemize}
        \item The answer NA means that the paper does not include experiments.
        \item The authors should answer "Yes" if the results are accompanied by error bars, confidence intervals, or statistical significance tests, at least for the experiments that support the main claims of the paper.
        \item The factors of variability that the error bars are capturing should be clearly stated (for example, train/test split, initialization, random drawing of some parameter, or overall run with given experimental conditions).
        \item The method for calculating the error bars should be explained (closed form formula, call to a library function, bootstrap, etc.)
        \item The assumptions made should be given (e.g., Normally distributed errors).
        \item It should be clear whether the error bar is the standard deviation or the standard error of the mean.
        \item It is OK to report 1-sigma error bars, but one should state it. The authors should preferably report a 2-sigma error bar than state that they have a 96\% CI, if the hypothesis of Normality of errors is not verified.
        \item For asymmetric distributions, the authors should be careful not to show in tables or figures symmetric error bars that would yield results that are out of range (e.g. negative error rates).
        \item If error bars are reported in tables or plots, The authors should explain in the text how they were calculated and reference the corresponding figures or tables in the text.
    \end{itemize}

\item {\bf Experiments compute resources}
    \item[] Question: For each experiment, does the paper provide sufficient information on the computer resources (type of compute workers, memory, time of execution) needed to reproduce the experiments?
    \item[] Answer: \answerNA{} 
    \item[] Justification: We do not include any experiments in our paper.
    \item[] Guidelines:
    \begin{itemize}
        \item The answer NA means that the paper does not include experiments.
        \item The paper should indicate the type of compute workers CPU or GPU, internal cluster, or cloud provider, including relevant memory and storage.
        \item The paper should provide the amount of compute required for each of the individual experimental runs as well as estimate the total compute. 
        \item The paper should disclose whether the full research project required more compute than the experiments reported in the paper (e.g., preliminary or failed experiments that didn't make it into the paper). 
    \end{itemize}
    
\item {\bf Code of ethics}
    \item[] Question: Does the research conducted in the paper conform, in every respect, with the NeurIPS Code of Ethics \url{https://neurips.cc/public/EthicsGuidelines}?
    \item[] Answer: \answerYes{} 
    \item[] Justification: As our paper is theoretical we do not have any concerns regarding: research with human subjects, and data concerns. Nor do we  foresee any societal impact. 
    \item[] Guidelines:
    \begin{itemize}
        \item The answer NA means that the authors have not reviewed the NeurIPS Code of Ethics.
        \item If the authors answer No, they should explain the special circumstances that require a deviation from the Code of Ethics.
        \item The authors should make sure to preserve anonymity (e.g., if there is a special consideration due to laws or regulations in their jurisdiction).
    \end{itemize}

\item {\bf Broader impacts}
    \item[] Question: Does the paper discuss both potential positive societal impacts and negative societal impacts of the work performed?
    \item[] Answer: \answerNA{} 
    \item[] Justification: Due to the fundamental research of our paper we do not foresee broader societal impacts as per the guidelines below.
    \item[] Guidelines:
    \begin{itemize}
        \item The answer NA means that there is no societal impact of the work performed.
        \item If the authors answer NA or No, they should explain why their work has no societal impact or why the paper does not address societal impact.
        \item Examples of negative societal impacts include potential malicious or unintended uses (e.g., disinformation, generating fake profiles, surveillance), fairness considerations (e.g., deployment of technologies that could make decisions that unfairly impact specific groups), privacy considerations, and security considerations.
        \item The conference expects that many papers will be foundational research and not tied to particular applications, let alone deployments. However, if there is a direct path to any negative applications, the authors should point it out. For example, it is legitimate to point out that an improvement in the quality of generative models could be used to generate deepfakes for disinformation. On the other hand, it is not needed to point out that a generic algorithm for optimizing neural networks could enable people to train models that generate Deepfakes faster.
        \item The authors should consider possible harms that could arise when the technology is being used as intended and functioning correctly, harms that could arise when the technology is being used as intended but gives incorrect results, and harms following from (intentional or unintentional) misuse of the technology.
        \item If there are negative societal impacts, the authors could also discuss possible mitigation strategies (e.g., gated release of models, providing defenses in addition to attacks, mechanisms for monitoring misuse, mechanisms to monitor how a system learns from feedback over time, improving the efficiency and accessibility of ML).
    \end{itemize}
    
\item {\bf Safeguards}
    \item[] Question: Does the paper describe safeguards that have been put in place for responsible release of data or models that have a high risk for misuse (e.g., pretrained language models, image generators, or scraped datasets)?
    \item[] Answer: \answerNA{} 
    \item[] Justification: We do not use nor provide any models or data.
    \item[] Guidelines:
    \begin{itemize}
        \item The answer NA means that the paper poses no such risks.
        \item Released models that have a high risk for misuse or dual-use should be released with necessary safeguards to allow for controlled use of the model, for example by requiring that users adhere to usage guidelines or restrictions to access the model or implementing safety filters. 
        \item Datasets that have been scraped from the Internet could pose safety risks. The authors should describe how they avoided releasing unsafe images.
        \item We recognize that providing effective safeguards is challenging, and many papers do not require this, but we encourage authors to take this into account and make a best faith effort.
    \end{itemize}

\item {\bf Licenses for existing assets}
    \item[] Question: Are the creators or original owners of assets (e.g., code, data, models), used in the paper, properly credited and are the license and terms of use explicitly mentioned and properly respected?
    \item[] Answer: \answerNA{} 
    \item[] Justification: No assets of the kind are used.
    \item[] Guidelines:
    \begin{itemize}
        \item The answer NA means that the paper does not use existing assets.
        \item The authors should cite the original paper that produced the code package or dataset.
        \item The authors should state which version of the asset is used and, if possible, include a URL.
        \item The name of the license (e.g., CC-BY 4.0) should be included for each asset.
        \item For scraped data from a particular source (e.g., website), the copyright and terms of service of that source should be provided.
        \item If assets are released, the license, copyright information, and terms of use in the package should be provided. For popular datasets, \url{paperswithcode.com/datasets} has curated licenses for some datasets. Their licensing guide can help determine the license of a dataset.
        \item For existing datasets that are re-packaged, both the original license and the license of the derived asset (if it has changed) should be provided.
        \item If this information is not available online, the authors are encouraged to reach out to the asset's creators.
    \end{itemize}

\item {\bf New assets}
    \item[] Question: Are new assets introduced in the paper well documented and is the documentation provided alongside the assets?
    \item[] Answer: \answerNA{} 
    \item[] Justification: We do not use or release any such assets.
    \item[] Guidelines:
    \begin{itemize}
        \item The answer NA means that the paper does not release new assets.
        \item Researchers should communicate the details of the dataset/code/model as part of their submissions via structured templates. This includes details about training, license, limitations, etc. 
        \item The paper should discuss whether and how consent was obtained from people whose asset is used.
        \item At submission time, remember to anonymize your assets (if applicable). You can either create an anonymized URL or include an anonymized zip file.
    \end{itemize}

\item {\bf Crowdsourcing and research with human subjects}
    \item[] Question: For crowdsourcing experiments and research with human subjects, does the paper include the full text of instructions given to participants and screenshots, if applicable, as well as details about compensation (if any)? 
    \item[] Answer: \answerNA{} 
    \item[] Justification: No human subjects are used.
    \item[] Guidelines:
    \begin{itemize}
        \item The answer NA means that the paper does not involve crowdsourcing nor research with human subjects.
        \item Including this information in the supplemental material is fine, but if the main contribution of the paper involves human subjects, then as much detail as possible should be included in the main paper. 
        \item According to the NeurIPS Code of Ethics, workers involved in data collection, curation, or other labor should be paid at least the minimum wage in the country of the data collector. 
    \end{itemize}

\item {\bf Institutional review board (IRB) approvals or equivalent for research with human subjects}
    \item[] Question: Does the paper describe potential risks incurred by study participants, whether such risks were disclosed to the subjects, and whether Institutional Review Board (IRB) approvals (or an equivalent approval/review based on the requirements of your country or institution) were obtained?
    \item[] Answer: \answerNA{} 
    \item[] Justification: No human subjects are used.
    \item[] Guidelines:
    \begin{itemize}
        \item The answer NA means that the paper does not involve crowdsourcing nor research with human subjects.
        \item Depending on the country in which research is conducted, IRB approval (or equivalent) may be required for any human subjects research. If you obtained IRB approval, you should clearly state this in the paper. 
        \item We recognize that the procedures for this may vary significantly between institutions and locations, and we expect authors to adhere to the NeurIPS Code of Ethics and the guidelines for their institution. 
        \item For initial submissions, do not include any information that would break anonymity (if applicable), such as the institution conducting the review.
    \end{itemize}

\item {\bf Declaration of LLM usage}
    \item[] Question: Does the paper describe the usage of LLMs if it is an important, original, or non-standard component of the core methods in this research? Note that if the LLM is used only for writing, editing, or formatting purposes and does not impact the core methodology, scientific rigorousness, or originality of the research, declaration is not required.
    \item[] Answer: \answerNA{} 
    \item[] Justification: An LLM was used to help derive result but not our core results or analyses.
    \item[] Guidelines:
    \begin{itemize}
        \item The answer NA means that the core method development in this research does not involve LLMs as any important, original, or non-standard components.
        \item Please refer to our LLM policy (\url{https://neurips.cc/Conferences/2025/LLM}) for what should or should not be described.
    \end{itemize}

\end{enumerate}

\newpage

\appendix

\section{Proofs for the Surrogate $\phi$}
\label{sec:proof_thm_curv-polyak}

{
\renewcommand{\thetheorem}{\ref{thm:curv-polyak}}
\begin{theorem}[Curvature of the Polyak surrogate]
Let $f(x)$ be convex and define $\bx^\star \in \argmin_{\bx} \ f(\bx)$. Define $\phi(\bx)=\frac12 (f(\bx)-f(\bx^\star))^2$. Then, we have
\begin{itemize}
\item $\phi$ is $\|\bg_{\by}\|^2$-LSUC around any $\by$ for any $\bg_{\by} \in \partial f(\by)$.
\item If $f$ is $s$-sharp, then $\phi$ has $s^2$-quadratic growth.
\item If $f$ has $\mu$-quadratic growth and $L$-self bounded, then $\phi$ satisfies a local quadratic growth:
\[
\phi(\bx)\geq \frac12 \frac{\mu \|\bg\|^2}{2L} \|\bx-\bx^\star\|^2, \ \forall \bg \in \partial f(\bx)~.
\]
\end{itemize}
\end{theorem}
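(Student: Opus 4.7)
The plan is to exploit the simple structure $\phi = \tfrac12 h^{2}$ with $h := f - f(\bx^\star)$, which is nonnegative (because $\bx^\star$ minimizes $f$) and convex (its subgradients coincide with those of $f$ up to the additive constant). All three bullets then follow from elementary manipulations once the subgradients of $\phi$ are identified.

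For the first bullet I would begin by finding a subgradient of $\phi$ at $\by$. Using convexity of $t \mapsto \tfrac12 t^2$ on $\R_{\geq 0}$, the subgradient inequality for $h$, and the sign $h(\by)\geq 0$, I chain
\[
\phi(\bx) \geq \phi(\by) + h(\by)\bigl(h(\bx)-h(\by)\bigr) \geq \phi(\by) + \langle h(\by)\bg_\by, \bx-\by\rangle,
\]
which shows $\tilde\bg_\by := h(\by)\bg_\by \in \partial \phi(\by)$ for every $\bg_\by \in \partial f(\by)$. To verify LSUC with $\lambda_\by = \|\bg_\by\|^2$, I expand $\phi(\bx^\star)-\phi(\by)-\langle \tilde\bg_\by, \bx^\star-\by\rangle$, use $\phi(\bx^\star)=0$, and invoke convexity of $f$, which gives $\langle \bg_\by, \by-\bx^\star\rangle \geq h(\by)$. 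The left-hand side simplifies to at least $\tfrac12 h(\by)^2$, while the right-hand side equals $\tfrac{1}{2\|\bg_\by\|^2}\cdot h(\by)^2\|\bg_\by\|^2 = \tfrac12 h(\by)^2$, so the LSUC bound holds (tightly).

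The second bullet is immediate by squaring: $s$-sharpness gives $f(\bx)-f(\bx^\star)\geq s\|\bx-\bx^\star\|$, hence $\phi(\bx)\geq \tfrac{s^2}{2}\|\bx-\bx^\star\|^2$. For the third bullet, I would factor $\phi(\bx) = \tfrac12\,(f(\bx)-f(\bx^\star))\,(f(\bx)-f(\bx^\star))$ and apply the two hypotheses to the two copies separately: quadratic growth yields $f(\bx)-f(\bx^\star) \geq \mu\|\bx-\bx^\star\|^2$ for one copy, and rearranging $L$-self-boundedness yields $f(\bx)-f(\bx^\star)\geq \tfrac{\|\bg\|^2}{2L}$ for the other. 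Multiplying and inserting the $\tfrac12$ prefactor delivers the target inequality.

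The only step requiring care is the identification of $\tilde\bg_\by$ as a subgradient of $\phi$, because $\phi$ need not be differentiable; the chain rule for the subdifferential of a squared nonnegative convex function has to be argued explicitly via convexity of $t \mapsto \tfrac12 t^2$ on $\R_{\geq 0}$ rather than invoked as a black box. Once that is handled, everything else is a few lines of algebra.
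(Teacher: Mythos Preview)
Your proposal is correct and essentially identical to the paper's proof: both identify $h(\by)\bg_\by$ as a subgradient of $\phi$, verify the LSUC inequality by expanding and applying convexity of $f$ (the paper writes this as showing the rearranged quantity is $\leq 0$, you phrase it as LHS $\geq$ RHS, but it is the same computation), square the sharpness bound for the second bullet, and factor $\phi$ into two copies of $f-f^\star$ to apply self-boundedness and quadratic growth respectively for the third. Your remark that the subgradient identification needs an explicit convexity-of-$t\mapsto\tfrac12 t^2$ argument rather than a black-box chain rule is well taken; the paper's appendix proof actually skips this step and just writes $\nabla\phi(\by)$, so your treatment is if anything slightly more careful.
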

}
\begin{proof}
\begin{align*}
\phi&(\by)-\phi(\bx^\star)
-\langle \nabla \phi(\by), \bx-\bx^\star\rangle + \frac{1}{2\|\bg_{\by}\|^2} \|\nabla \phi(\by)\|^2_2\\
&= \frac12 (f(\by)-f(\bx^\star))^2
- (f(\by) -f(\bx^\star))\langle \bg_{\by}, \by-\bx^\star\rangle
+ \frac{1}{2 \|\bg_{\by}\|^2} \|\nabla \phi(\by)\|^2\\
&= \frac12 (f(\by)-f(\bx^\star))^2
- (f(\by) -f(\bx^\star))\langle \bg_{\by}, \by-\bx^\star\rangle
+ \frac{(f(\by) -f(\bx^\star))^2}{2}\\
&= (f(\by) -f(\bx^\star)) \left(f(\by)-f(\bx^\star) - \langle \bg_{\by}, \by-\bx^\star\rangle \right)\\
&\leq 0,
\end{align*}

where the inequality is due to the convexity of $f$ and the fact that $f(\by)- f(\bx^\star)\geq 0$.

For the second property, we have
\[
f(\bx)-f(\bx^\star)\geq \alpha \|\bx-\bx^\star\|
\Rightarrow \phi(\bx)-\phi(\bx^\star) \geq \frac{\alpha^2}{2}\|\bx-\bx^\star\|^2~.
\]
For the third property we have
\begin{align*}
    \phi(\bx) = \frac12 (f(\bx)-f(\bx^\star))^2 \geq \frac{\|\bg\|^2}{2L}(f(\bx)-f(\bx^\star)) \geq \frac12 \frac{\mu \|\bg\|^2}{2L} \|\bx-\bx^\star\|^2.
\end{align*}
\end{proof}

\begin{remark}\label{remark:fejer}
Convergence of the last iterate follows from a classic argument with Fej\'{e}r monotone sequences. From Lemma~\ref{lemma:one_step} we have that the distance to any solution is decreasing $\|\bx_{t+1}-\bx^\star\| \leq \|\bx_{t}-\bx^\star\|$ for any minimizer $\bx^\star$ of $\phi$, that is, $\{\bx_t\}_{t \geq 0}$ is a Fej\'{e}r monotone sequence with respect to the solution set. Since we have $\phi(\bx_t) \to 0$ and $\phi$ is continuous then for every limit point $\bx'$ of the sequence it also holds that $\phi(\bx')=0$ implying $\bx'$ is also a minimizer of $\phi$. Therefore we can use the fact that if $\{\bx_t\}_{t \geq 0}$ is Fej\'{e}r monotone with respect to the solution set and the set contains all the limit points of the sequence then the sequence must converge to a point in the solution set (see Theorem 8.16 in \citet{Beck17}).
\end{remark}

\section{Relationship between Star Upper Curvature and Upper Quadratic Growth}
\label{sec:equivalence}

For a function $f$, denote by $\mathcal{X}^\star := \{\bx : f(\bx)=\min_{\bx} \ f(\bx)\}$.
In \citet{GuilleGGM21}, they define the following function class.
\begin{definition}
A function $f$ is $L$-quadratically upper bounded (denoted $L$-QG$^+$) if for all $\bx \in \R^d$:
\[
f(\bx) - f^\star
\leq \frac{L}{2} \min_{\bx^\star \in \mathcal{X}^\star}\|\bx - \bx^\star\|^2_2~.
\]
\end{definition}

We now show that convex $L$-QG$^+$ are globally $L$-star upper curved, while the other direction is true for the local version of the two definitions.
\begin{theorem}
Let $f$ be a convex $L$-QG$^+$ function, then $f$ is globally $L$-star upper curved.
On the other hand, let $f$ be $L_{\bx}$-LSUC, then for all $\bx$ we have
\[
f(\bx) - f^\star
\leq \frac{L_{\bx}}{2} \min_{\bx^\star \in \mathcal{X}^\star}\|\bx - \bx^\star\|^2_2~.
\]
\end{theorem}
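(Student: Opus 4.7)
The plan is to prove the two implications independently. Each reduces to a one-line calculation: one direction is a completion of the square in $\bd$, and the other is its convex dual, taking the supremum over the (hypothetical) gradient norm.

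For the forward direction (convex $L$-QG$^+$ implies globally $L$-LSUC), fix $\by \in \R^d$ and $\bg_{\by} \in \partial f(\by)$. First I would combine the convexity lower bound $f(\bx) \geq f(\by) + \langle \bg_{\by}, \bx - \by\rangle$ with the $L$-QG$^+$ upper bound $f(\bx) - f^\star \leq \tfrac{L}{2}\|\bx - \bx^\star\|^2$ (valid for any fixed $\bx^\star \in \mathcal{X}^\star$, since the squared distance to a point is at least the squared distance to the set) to get
\begin{equation*}
f(\by) - f^\star + \langle \bg_{\by}, \bx - \by\rangle \;\leq\; \tfrac{L}{2}\|\bx - \bx^\star\|^2 \qquad \forall \bx \in \R^d.
\end{equation*}
Then I would substitute $\bx = \bx^\star + \bd$ and rearrange so $\bd$ is isolated on the right, giving $f(\by) - f^\star + \langle \bg_{\by}, \bx^\star - \by\rangle \leq \tfrac{L}{2}\|\bd\|^2 - \langle \bg_{\by}, \bd\rangle$. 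The right-hand side is a convex quadratic in $\bd$ with minimum $-\|\bg_{\by}\|^2/(2L)$ at $\bd = \bg_{\by}/L$. Choosing this $\bd$ and rearranging yields the LSUC inequality with $\lambda_{\by}=L$.

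For the reverse direction, start from the LSUC inequality at $\by = \bx$ with any $\bg_{\bx} \in \partial f(\bx)$ and rearrange:
\begin{equation*}
f(\bx) - f^\star \;\leq\; \langle \bg_{\bx}, \bx - \bx^\star\rangle - \tfrac{1}{2L_{\bx}}\|\bg_{\bx}\|^2.
\end{equation*}
Apply Cauchy--Schwarz to the inner product and view the result as a concave quadratic in the scalar $t := \|\bg_{\bx}\|$. Its supremum over $t \geq 0$ is $\tfrac{L_{\bx}}{2}\|\bx - \bx^\star\|^2$, attained at $t = L_{\bx}\|\bx - \bx^\star\|$. Since the bound on $f(\bx) - f^\star$ is dominated by this maximum regardless of the actual subgradient norm, we obtain $f(\bx) - f^\star \leq \tfrac{L_{\bx}}{2}\|\bx - \bx^\star\|^2$. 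Taking the infimum over $\bx^\star \in \mathcal{X}^\star$ (which is legitimate once one reads LSUC as holding with respect to any minimizer, or equivalently applies it to the projection of $\bx$ onto $\mathcal{X}^\star$) yields the claimed bound.

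Neither step is technically deep; the main conceptual subtlety — and the only thing I would flag — is in the second direction, where one must bound the right-hand side by the supremum over hypothetical values of $\|\bg_{\bx}\|$ rather than exploit the specific subgradient at hand. This is the step where the ``dual'' nature of the two definitions (upper bound on $f-f^\star$ in terms of distance versus lower bound on the Bregman-like quantity in terms of gradient norm) becomes visible: completing the square in $\bd$ converts one into the other, and vice-versa.
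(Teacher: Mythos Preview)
Your proposal is correct and follows essentially the same approach as the paper. In the forward direction the paper also plugs in $\bx = \bx^\star + \tfrac{1}{L}\bg_{\by}$ (your minimizing $\bd$), and in the reverse direction it likewise applies Cauchy--Schwarz and then maximizes the resulting concave quadratic in $\|\bg_{\bx}\|$ before taking the minimum over $\bx^\star \in \mathcal{X}^\star$.
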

\begin{proof}
Assume that $f$ is $L$-QG$^+$. Then, we have
\[
f(\by) + \langle \bg, \bx-\by\rangle
\leq f(\bx)
\leq f^\star+\frac{L}{2} \min_{\bx' \in \mathcal{X}^\star}\|\bx - \bx'\|^2_2,
\]
where the first inequality is due to convexity and $\bg \in \partial f(\by)$.
Now, set $\bx = \bx^\star +\frac{1}{L}\bg$ for any $\bx^\star \in \mathcal{X}^\star$, to have
\begin{align*}
f(\by)-f^\star
&\leq -\langle \bg, \bx^\star+\frac{1}{L} \bg - \by\rangle + \frac{L}{2}\min_{\bx' \in \mathcal{X}^\star} \left\|\bx^\star+\frac{1}{L}\bg - \bx'\right\|^2_2
\leq -\langle \bg, \bx^\star+\frac{1}{L} \bg - \by\rangle + \frac{\|\bg\|^2_2}{2L}\\
&= \langle \bg, \by - \bx^\star\rangle - \frac{1}{2L} \|\bg\|^2_2~.
\end{align*}

Now, assume that $f$ is $\lambda_{\bx}$-LSUC and set $\bg \in \partial f(\bx)$. For any $\bx^\star \in \argmin_{\bx} \ f(\bx)$, using Cauchy-Schwarz's inequality, we have
\[
f(\bx)-f(\bx^\star)
\leq\langle \bg, \bx-\bx^\star\rangle - \frac{1}{2 \lambda_{\bx}} \|\bg\|^2_2
\leq \|\bg\|_2 \|\bx- \bx^\star\|_2 - \frac{1}{2 \lambda_{\bx}} \|\bg\|^2_2
\leq \frac{\lambda_{\bx}}{2} \|\bx-\bx^\star\|^2_2~.
\]
Given that this holds for all $\bx^\star \in \mathcal{X}^\star$, it implies
\[
f(\bx)-f^\star
\leq \frac{\lambda_{\bx}}{2} \min_{\bx' \in \mathcal{X}^\star}\|\bx - \bx'\|^2_2~. \qedhere
\]
\end{proof}

\section{Proofs for the Stochastic Surrogate $\psi$}
\label{sec:proofs_stoch_surrogate}

{\renewcommand{\thetheorem}{\ref{thm:generalized_polyak}}
\begin{theorem}
Let $h:\R^d \times \mathcal{S} \to \R_{\geq 0}$ be convex.
Denote by $H(\bx)=\E_{\bxi \sim D}[h(\bx,\bxi)]$.
Then, setting $\eta_t= \min\left(\frac{1}{\|\bg_t\|^2}, \frac{\gamma}{h(\bx_t,\bxi_t)}\right)$ in Algorithm~\ref{alg:generalized_polyak}, we have
\begin{itemize}
\item If $h(\cdot, \bxi_t)$ is $L$-self bounded, we have
\[
\frac{1}{T}\sum_{t=1}^T \min\left(\frac{1}{2 L},\gamma\right) \E[H(\bx_t)] 
\leq \frac{\|\bx_1-\bx^\star\|^2}{T}  +  2\gamma H(\bx^\star)
\]
and
\[
\sum_{t=1}^T \min\left(\frac{1}{2 L},\gamma\right) \E[H(\bx_t)]
\leq \frac12 \|\bx_1-\bx^\star\|^2  + \frac{1}{2}\sum_{t=1}^T \gamma^2 \E[\|\bg_t\|^2] + \gamma \sum_{t=1}^T H(\bx^\star)~.
\]
\item If $h(\cdot, \bxi_t)$ is $G$-Lipschitz, then we have
\[
\frac{1}{T}\sum_{t=1}^T \E[H(\bx_t)]
\leq \frac{\|\bx_1-\bx^\star\|^2}{\gamma T} + 2 H(\bx^\star) + \frac{G\|\bx_1-\bx^\star\|}{\sqrt{T}} + G \sqrt{2 \gamma H(\bx^\star)}~.
\]
\item If $h(\cdot, \bxi)$ is $L$-self-bounded and $H(\bx)$ has $\mu$-quadratic growth, then
\[
\E\left[\|\bx_{T+1}-\bx^\star\|^2\right]
\leq \E\left[\|\bx_{1}-\bx^\star\|^2\right] a^{T+1} + b\frac{1-a^{T+1}}{1-a}H(\bx^\star),
\]
where $a=\frac{\mu}{2} \min\left(\frac{1}{2L}, \gamma\right)$ and $b=2\gamma - \min\left(\frac{1}{2L}, \gamma\right)$.
\end{itemize}
\end{theorem}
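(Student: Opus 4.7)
The common engine for all three bullets is a single one-step inequality. Expanding $\|\bx_{t+1}-\bx^\star\|^2 = \|\bx_t - \bx^\star - \eta_t h(\bx_t,\bxi_t)\bg_t\|^2$ and, since $\bg_t \in \partial h(\bx_t,\bxi_t)$, using convexity of $h(\cdot,\bxi_t)$ to lower bound $\langle \bg_t, \bx_t-\bx^\star\rangle$ by $h(\bx_t,\bxi_t)-h(\bx^\star,\bxi_t)$ yields
\[
\|\bx_{t+1}-\bx^\star\|^2 \leq \|\bx_t-\bx^\star\|^2 - 2\eta_t h(\bx_t,\bxi_t)\bigl(h(\bx_t,\bxi_t)-h(\bx^\star,\bxi_t)\bigr) + \eta_t^2 h^2(\bx_t,\bxi_t)\|\bg_t\|^2.
\]
The two budgets delivered by the clip $\eta_t = \min\{1/\|\bg_t\|^2,\, \gamma/h(\bx_t,\bxi_t)\}$, namely $\eta_t h(\bx_t,\bxi_t)\leq \gamma$ and $\eta_t \|\bg_t\|^2 \leq 1$, are the only tools needed to absorb the variance term; the three cases differ only in how they are deployed.

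For the self-bounded setting (first inequality of bullet~1 and bullet~3), the plan is to use the second budget to bound $\eta_t^2 h^2\|\bg_t\|^2 \leq \eta_t h^2(\bx_t,\bxi_t)$ and the first on the cross term $\eta_t h(\bx_t,\bxi_t) h(\bx^\star,\bxi_t) \leq \gamma h(\bx^\star,\bxi_t)$, collapsing the one-step bound to
\[
\|\bx_{t+1}-\bx^\star\|^2 \leq \|\bx_t-\bx^\star\|^2 - \eta_t h^2(\bx_t,\bxi_t) + 2\gamma h(\bx^\star,\bxi_t).
\]
Self-boundedness then gives $\eta_t \geq \min\{1/(2L),\gamma\}/h(\bx_t,\bxi_t)$, so $\eta_t h^2(\bx_t,\bxi_t) \geq \min\{1/(2L),\gamma\}\,h(\bx_t,\bxi_t)$; summing, telescoping, and taking expectations produces the first inequality of bullet~1. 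For bullet~3, taking conditional expectation in the same bound and applying $\mu$-quadratic growth $H(\bx_t) \geq H(\bx^\star) + (\mu/2)\|\bx_t-\bx^\star\|^2$ yields a one-step contraction $\E[\|\bx_{t+1}-\bx^\star\|^2\mid \bx_t] \leq (1-a)\|\bx_t-\bx^\star\|^2 + b\, H(\bx^\star)$, which unrolls to the stated geometric sum. For the second inequality of bullet~1, I swap budgets: use $\eta_t^2 h^2\|\bg_t\|^2 \leq \gamma^2\|\bg_t\|^2$ for the variance, keep $-2\eta_t h^2(\bx_t,\bxi_t) \leq -2\min\{1/(2L),\gamma\}\,h(\bx_t,\bxi_t)$ via self-boundedness, sum, take expectations, and divide by $2$.

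The Lipschitz case (bullet~2) is where I expect the main obstacle, since without self-boundedness one cannot directly convert a bound on $\sum \eta_t h^2(\bx_t,\bxi_t)$ into one on $\sum h(\bx_t,\bxi_t)$. Running the self-bounded reduction (which did not itself use self-boundedness) still gives
\[
\sum_t \eta_t h^2(\bx_t,\bxi_t) \leq \|\bx_1-\bx^\star\|^2 + 2\gamma \sum_t h(\bx^\star,\bxi_t),
\]
and $\|\bg_t\|\leq G$ provides the pathwise lower bound $\eta_t h^2(\bx_t,\bxi_t) \geq \min\{h^2(\bx_t,\bxi_t)/G^2,\, \gamma\, h(\bx_t,\bxi_t)\}$. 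I then split the time indices at the threshold $h(\bx_t,\bxi_t)=\gamma G^2$: on the index set $B$ where $h(\bx_t,\bxi_t) > \gamma G^2$ the bound gives directly $\gamma \sum_B h(\bx_t,\bxi_t) \leq (\mathrm{RHS})$, while on the complementary set $A$ the Cauchy-Schwarz inequality yields $\bigl(\sum_A h(\bx_t,\bxi_t)\bigr)^2 \leq |A|\sum_A h^2(\bx_t,\bxi_t) \leq T G^2 (\mathrm{RHS})$. Taking a pathwise square root, applying $\sqrt{a+b}\leq \sqrt a + \sqrt b$, then swapping expectation and square root via Jensen's inequality, using $\E\bigl[\sum_t h(\bx^\star,\bxi_t)\bigr] = T H(\bx^\star)$, and finally dividing by $T$ reproduces exactly the four right-hand terms of the stated bound.
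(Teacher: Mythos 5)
Your proof is correct. For the first and third bullets you follow essentially the same route as the paper: the same one-step expansion of $\|\bx_{t+1}-\bx^\star\|^2$ combined with convexity of $h(\cdot,\bxi_t)$ (the paper packages this as Lemma~\ref{lemma:master_revised} applied to $\psi=\frac12 h^2$), the same two budgets $\eta_t\|\bg_t\|^2\le 1$ and $\eta_t h(\bx_t,\bxi_t)\le\gamma$ from the clipped stepsize, the same lower bound $\eta_t h^2(\bx_t,\bxi_t)\ge\min\left(\frac{1}{2L},\gamma\right)h(\bx_t,\bxi_t)$ from self-boundedness, and the same conditional-expectation contraction under quadratic growth. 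The genuine difference is the Lipschitz case. The paper lower-bounds $\eta_t h^2(\bx_t,\bxi_t)\ge\gamma B(h(\bx_t,\bxi_t))$ with the convex function $B(x)=\frac{x^2}{x+\gamma G^2}$, applies Jensen's inequality to $\frac1T\sum_t B(h(\bx_t,\bxi_t))$, and then inverts $B$ via $B^{-1}(y)\le y+G\sqrt{\gamma y}$; you instead split the time indices at the threshold $h(\bx_t,\bxi_t)=\gamma G^2$ and apply Cauchy--Schwarz on the small-value indices. Both yield exactly the stated four-term bound with the same constants. Your argument is more elementary (no convexity check on $B$, no inversion formula), while the paper's $B$/Jensen machinery is what it reuses verbatim for the H\"older-self-bounded extension in Appendix~\ref{sec:more_results}, where a clean threshold split would be messier. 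One inherited blemish: the contraction you (and the paper) derive unrolls to $(1-a)^{T}\|\bx_1-\bx^\star\|^2 + b\,H(\bx^\star)\frac{1-(1-a)^{T}}{a}$, not to the $a^{T+1}$ expression in the statement; that is a typo in the theorem rather than a gap in your argument.
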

}
\begin{proof}
For simplicity, denote by $h_t(\bx)=h(\bx,\xi_t)$.

From the Lemma~\ref{lemma:master_revised}, we have
\[
\sum_{t=1}^T \eta_t \frac{1}{2}h^2_t(\bx_t)
\leq \frac12 \|\bx_1-\bx^\star\|^2 + \sum_{t=1}^T \frac{\eta_t}{2}\left(\eta_t -\frac{1}{\|\bg_t\|^2}\right) \|\bg_t\|^2 h^2_t(\bx_t)+ \sum_{t=1}^T \eta_t h_t(\bx_t) h_t(\bx^\star)~. 
\]
For the last term in the r.h.s., we have
\[
\eta_t h_t(\bx_t) h_t(\bx^\star) 
= \min\left(\frac{1}{\|\bg_t\|^2}, \frac{\gamma}{h_t(\bx_t)}\right) h_t(\bx_t) h_t(\bx^\star) 
\leq \gamma h_t(\bx^\star) ~.
\]

Observe that if $h_t$ is $L$-self bounded then $\|g_t\|^2 \leq 2L(h_t(\bx)- \inf_{\bx}h_t(\bx)) \leq 2Lh_t(\bx)$. Therefore, we have
\[
\min\left(\frac{1}{2 L},\gamma\right) h_t(\bx_t)
\leq \min\left(\frac{h_t(\bx_t)}{\|\bg_t\|^2}, \gamma\right) h_t(\bx_t)
= \eta_t h^2_t(\bx_t)~.
\]

Now, since $\eta_t \leq \nicefrac{1}{\|\bg_t\|^2}$ the second term on the r.h.s can be discarded because it's negative.
Taking expectations, we have the first stated bound.

For the second result, bring on the l.h.s. the terms $\frac{\eta_t}{2} h^2_t(\bx_t)$.
Taking expectations, we have the stated bound.

For the third result, first of all observe that for any $a,b>0$ we have
\[
\min(a,b)
= \frac{1}{\max(1/a,1/b)}
\geq \frac{1}{1/a+1/b}~. 
\]
Hence, we have
\begin{align*}
\eta_t h^2_t(\bx_t)
= \min\left(\frac{h_t(\bx_t)}{\|\bg_t\|^2}, \gamma\right) h_t(\bx_t)
\geq \gamma \frac{h_t^2(\bx_t)}{h_t(\bx_t)+\gamma \|\bg_t\|^2}
\geq \gamma \frac{h_t^2(\bx_t)}{h_t(\bx_t)+\gamma G^2}~.
\end{align*}
Now, observe that the function $B(x)=\frac{x^2}{x+\gamma G^2}$ is convex $x\geq0$, because $B''(x)=\frac{2 \gamma^2 G^4}{(\gamma G^2+x)^3}$.
So, summing over time and using Jensen's inequality, we have
\begin{align*}
B\left(\frac{1}{T}\sum_{t=1}^T h_t(\bx_t)\right)
&\leq \frac{1}{T}\sum_{t=1}^T B(h_t(\bx_t))
= \frac{1}{T}\sum_{t=1}^T \frac{h^2_t(\bx_t)}{h_t(\bx_t)+\gamma G^2}
\leq \frac{\|\bx_1-\bx^\star\|^2}{\gamma T} + \frac{2}{T}\sum_{t=1}^T h_t(\bx^\star)~.
\end{align*}

Note that $B^{-1}(x)=\frac{x+\sqrt{x^2+4 x \gamma G^2}}{2}\leq x+ G\sqrt{x \gamma} $\footnote{Using the inequality $\sqrt{z+y}\leq \sqrt{z}+\sqrt{y}\quad \forall z,y \geq 0$.}, that is an increasing concave function for $x\geq 0$.
So, inverting $B$ and taking expectation, we have
\begin{align*}
\frac{1}{T}\sum_{t=1}^T \E[H(\bx_t)]
&\leq \E\left[B^{-1}\left(\frac{\|\bx_1-\bx^\star\|^2}{\gamma T} + \frac{2}{T}\sum_{t=1}^T h_t(\bx^\star) \right)\right]\\
&\leq \frac{\|\bx_1-\bx^\star\|^2}{\gamma T} + 2 H(\bx^\star) + \frac{G\|\bx_1-\bx^\star\|}{\sqrt{T}} + G \sqrt{2 \gamma H(\bx^\star)}~.
\end{align*}

For the smooth and quadratic growth case, we have
\begin{align*}
\min\left(\frac{1}{2L}, \gamma\right)h_t(\bx_t)
&\leq \eta_t h^2_t(\bx_t)\\
&\leq \|\bx_t-\bx^\star\|^2 - \|\bx_{t+1}-\bx^\star\|^2+ 2 \eta_t h_t(\bx_t) h_t(\bx^\star)\\
&\leq \|\bx_t-\bx^\star\|^2 - \|\bx_{t+1}-\bx^\star\|^2+ 2 \gamma h_t(\bx^\star)~.
\end{align*}
Taking expectations and using the quadratic growth assumption on $H$, we have
\begin{align*}
\frac{\mu}{2} \min\left(\frac{1}{2L}, \gamma\right) \E\left[\|\bx_t-\bx^\star\|^2\right]
&\leq \min\left(\frac{1}{2L}, \gamma\right)\E[H(\bx_t) - H(\bx^\star)]\\
&\leq\E\left[\|\bx_t-\bx^\star\|^2\right] - \E\left[\|\bx_{t+1}-\bx^\star\|^2\right] + \left(2\gamma - \min\left(\frac{1}{2L}, \gamma\right)\right)H(\bx^\star)~.
\end{align*}
Hence, we obtain
\[
\E\left[\|\bx_{t+1}-\bx^\star\|^2\right]
\leq (1-a) \E\left[\|\bx_{t}-\bx^\star\|^2\right] + b,
\]
where $a=\frac{\mu}{2} \min\left(\frac{1}{2L}, \gamma\right)$ and $b=\left(2\gamma - \min\left(\frac{1}{2L}, \gamma\right)\right)H(\bx^\star)$.
Note we have $0 \leq a \leq \nicefrac{\mu}{4L} \leq 1 $.
From this inequality, it is immediate to obtain
\[
\E\left[\|\bx_{T+1}-\bx^\star\|^2\right]
\leq \E\left[\|\bx_{1}-\bx^\star\|^2\right] a^{T+1} + b\frac{1-a^{T+1}}{1-a}~. \qedhere
\]
\end{proof}

\section{Additional Convergence Result for Generalized Polyak Stepsize}
\label{sec:more_results}

\begin{cor}
Let $f:\R^d \times \mathcal{S}$ be convex and $L$-self-bounded. Define $\bx^\star=\argmin_{\bx} \ (\E_{\bxi \sim D}[f(\bx,\bxi)]:=F(\bx))$.
Let $h(\bx,\bxi)=(f(\bx,\bxi)-f(\bx^\star, \bxi))_+$. Then, running Algorithm~\ref{alg:generalized_polyak} with $\gamma=\infty$, we have
\[
\E[F(\bar{\bx}_T)]-F(\bx^\star)
\leq \frac{2 L \|\bx_1-\bx^\star\|^2}{T} + \frac{\E[\sqrt{2L(F(\bx^\star)-\E[\inf_{\bx} f(\bx,\bxi)])}\|\bx_1-\bx^\star\|}{\sqrt{T}},
\]
where $\bar{\bx}_T=\frac1T \sum_{t=1}^T \bx_t$ or $\bar{\bx}_T=\argmin_{\bx \in \{\bx_1, \dots, \bx_T\}} \ F(\bx)$.
\end{cor}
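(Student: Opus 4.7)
The plan is to run the generalized Polyak analysis of Algorithm~\ref{alg:generalized_polyak} with the specific choice $h(\bx,\bxi)=(f(\bx,\bxi)-f(\bx^\star,\bxi))_+$, exploiting three structural facts: (i) $h(\cdot,\bxi)$ is convex (composition of the convex, non-decreasing $(\cdot)_+$ with a convex function), (ii) $h(\bx^\star,\bxi)=0$ pointwise in $\bxi$, hence $H(\bx^\star)=0$, and (iii) $H(\bx)=\E[h(\bx,\bxi)]\ge F(\bx)-F(\bx^\star)$. Using (iii) plus convexity of $F$ (or the argmin choice of $\bar\bx_T$) together with the tower property of expectation reduces the target quantity: $\E[F(\bar\bx_T)]-F(\bx^\star)\le \tfrac{1}{T}\sum_t \E[h(\bx_t,\bxi_t)]$.

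Next, because $H(\bx^\star)=0$, the same one-step analysis that proves Lemma~\ref{lemma:master_revised} (applied with $\gamma=\infty$, $\epsilon_t=0$, and the subgradient $h_t(\bx_t)\bg_t$ of $\psi_t=\tfrac12 h_t^2$) collapses to the clean telescoping bound
\[
\sum_{t=1}^T \frac{h_t^2(\bx_t)}{\|\bg_t\|^2} \;\le\; \|\bx_1-\bx^\star\|^2,
\]
where $h_t(\bx)=h(\bx,\bxi_t)$ and $\bg_t\in\partial h_t(\bx_t)$. This is the deterministic-looking ingredient; it will be combined with a Cauchy--Schwarz step against $\sum_t\|\bg_t\|^2$.

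The key step, and the main obstacle, is that $h_t$ is not self-bounded in the usual sense even though $f(\cdot,\bxi)$ is. However, when $h_t(\bx_t)>0$ we have $\partial h_t(\bx_t)=\partial f(\bx_t,\bxi_t)$, so the $L$-self-boundedness of $f(\cdot,\bxi_t)$ yields the \emph{shifted} inequality
\[
\|\bg_t\|^2 \;\le\; 2L\bigl(f(\bx_t,\bxi_t)-\inf_{\bx} f(\bx,\bxi_t)\bigr) \;=\; 2L\bigl(h_t(\bx_t)+c_t\bigr),\qquad c_t:=f(\bx^\star,\bxi_t)-\inf_{\bx} f(\bx,\bxi_t)\ge 0,
\]
while the indices with $h_t(\bx_t)=0$ contribute zero to $\sum_t h_t(\bx_t)$ and can be dropped. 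The extra additive term $c_t$ is exactly what injects the interpolation-gap term into the final rate.

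Putting the pieces together, Cauchy--Schwarz on the indices where $h_t(\bx_t)>0$ gives
\[
S_T:=\sum_{t=1}^T h_t(\bx_t) \;\le\; \sqrt{\sum_t \tfrac{h_t^2(\bx_t)}{\|\bg_t\|^2}}\,\sqrt{\sum_t\|\bg_t\|^2} \;\le\; \|\bx_1-\bx^\star\|\,\sqrt{2L(S_T+C_T)},
\]
with $C_T=\sum_t c_t$. Solving this quadratic in $S_T$ and using $\sqrt{a+b}\le\sqrt{a}+\sqrt{b}$ yields $S_T\le 2L\|\bx_1-\bx^\star\|^2+\|\bx_1-\bx^\star\|\sqrt{2LC_T}$. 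Dividing by $T$, taking expectation, and applying Jensen's inequality to the concave square root gives $\E[\sqrt{C_T}]\le\sqrt{\E[C_T]}=\sqrt{T(F(\bx^\star)-\E[\inf_\bx f(\bx,\bxi)])}$, which combined with the reduction in the first paragraph yields the stated bound. The main conceptual obstacle is identifying the ``shifted'' self-boundedness of $h$ and tracking the resulting interpolation-gap term $c_t$ cleanly through Cauchy--Schwarz; all other steps are routine.
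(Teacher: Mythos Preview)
Your proposal is correct and reaches the stated bound, but the route differs from the paper's in one substantive step: how the telescoping inequality $\sum_t h_t^2(\bx_t)/\|\bg_t\|^2 \le \|\bx_1-\bx^\star\|^2$ is combined with the shifted self-boundedness $\|\bg_t\|^2\le 2L(h_t(\bx_t)+c_t)$.

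You apply Cauchy--Schwarz \emph{over the time index along the realized trajectory}, writing $S_T=\sum_t h_t(\bx_t)\le \sqrt{\sum_t h_t^2/\|\bg_t\|^2}\,\sqrt{\sum_t\|\bg_t\|^2}$, bounding each factor, and solving the resulting pathwise quadratic in $S_T$; expectation and a single use of Jensen on $\sqrt{\cdot}$ come only at the very end. The paper instead first substitutes the self-boundedness bound pointwise to obtain $\sum_t \frac{h_t^2(\bx_t)}{4L(h_t(\bx_t)+c_t)}\le \tfrac12\|\bx_1-\bx^\star\|^2$, then applies Cauchy--Schwarz \emph{for random variables}, $\E[X^2/Y]\ge(\E X)^2/\E Y$, per time step to pass to $\E[h_t(\bx_t)]$ and $\sigma^2=F(\bx^\star)-\E[\inf_{\bx} f(\bx,\bxi)]$, and finally invokes the convex function $B(x)=x^2/(x+\sigma^2)$ with Jensen over $t$ and inverts $B$ (mirroring the Lipschitz case in Theorem~\ref{thm:generalized_polyak}). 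Your argument is more self-contained and avoids the $B$-function machinery; the paper's argument reuses an already-developed template and makes the per-step expectation structure explicit. Both yield identical constants.
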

\begin{proof}
Given the definition of $h$, we have that $H(\bx^\star)=0$.
Moreover, $h(\bx,\bxi)\geq f(\bx, \bxi)-f(\bx^\star, \bxi)$, hence $\E[H(\bx_t)]\geq \E[F(\bx_t)]-F(\bx^\star)$ for any $t$.


We have that
\[
\partial h(\bx, \bxi) 
=
\begin{cases}
\partial f(\bx, \bxi), & \text{ if } f(\bx, \bxi)> f(\bx^\star, \bxi)\\
\{0\}, & \text{ if } f(\bx, \bxi) < f(\bx^\star, \bxi)\\ 
\{\alpha \bg: \alpha \in [0,1], \bg \in \partial f(\bx, \bxi)\}, & \text{ if } f(\bx, \bxi) = f(\bx^\star, \bxi)
\end{cases}~.
\]
Hence, for all $\bg_t \in \partial h(\bx_t, \bxi_t)$ we have 
\begin{align*}
    \|\bg_t\|^2 &\leq 2L(f(\bx,\bxi_t)- \inf_{\bx} \ f(\bx, \bxi_t)) \\
    &= 2L(f(\bx,\bxi_t)- f(\bx^\star,\bxi_t) +f(\bx^\star,\bxi_t)- \inf_{\bx}  f(\bx, \bxi_t))\\
    &\leq 2L(h(\bx,\bxi_t)+f(\bx^\star,\bxi_t)- \inf_{\bx}  f(\bx, \bxi_t))~.
\end{align*}
So, using this inequality in Lemma~\ref{lemma:master_revised} gives
\begin{equation}
\label{eq:lemma_sps_smooth_eq1}
    \sum_{t=1}^T \frac{h^2_t(\bx_t)}{4L(h_t(\bx_t)+f_t(\bx^\star)-f_t^\star)}\leq\sum_{t=1}^T \frac{h^2_t(\bx_t)}{2\| \bg_t\|^2}
    =\sum_{t=1}^T \eta_t \frac{1}{2}h^2_t(\bx_t)
    \leq \frac12 \|\bx_1-\bx^\star\|^2~. 
\end{equation}

Now, from Cauchy–Schwarz inequality, for any non-negative random variable $Y$ and random variable $X$, we have $\E[X^2/Y]\geq (\E[X])^2/\E[Y]$. 
Denote by $f_t^\star=\inf_{\bx} \ f(\bx,\bxi_t)$.
Given that $f_t(\bx^\star)-f^\star_t\geq 0$, if $f_t(\bx^\star)-f^\star_t=0$  with probability 1, i.e., $F(\bx^\star)-\E[\inf_{\bx} f(\bx,\bxi)]=0$, then the expectation of the l.h.s. of the previous inequality is $\E[h_t(\bx_t)]$. Otherwise, if we assume $F(\bx^\star)-\E[\inf_{\bx} f(\bx,\bxi)]>0$, we have
\[
\E\left[\frac{h^2_t(\bx_t)}{4L(h_t(\bx_t)+f_t(\bx^\star)-f_t^\star)}\right]
\geq \frac{(\E[h_t(\bx_t)])^2}{4L (\E[h_t(\bx_t)]+ F(\bx^\star)-\E[\inf_{\bx} f(\bx,\bxi)])}~.
\]
Hence, in all cases we have the last expression is a lower bound to the l.h.s. of \eqref{eq:lemma_sps_smooth_eq1}.
We now can proceed as in the proof of the Lipschitz case in Theorem~\ref{thm:generalized_polyak}, to have the stated bound.
\end{proof}

We now extend Theorem~\ref{thm:generalized_polyak} to H\"{o}lder-self-bounded functions.
\begin{definition}
We say that $f$ is $(L_\nu,\nu)$ H\"{o}lder-self-bounded if there exits $\nu \in [0,1]$ and $L_\nu$ such That
\[
\|\bg\|^2
\leq \left(1+\frac{1}{\nu}\right)^\frac{2\nu}{1+\nu}L^\frac{2}{1+\nu}_\nu (f(\bx)-f(\bx^\star))^\frac{2\nu}{1+\nu}, \ \forall \bg \in \partial f(\bx)~.
\]
\end{definition}
This definition is weaker than both Lipschitz and smoothness and it is easy to see that $L$-smooth functions satisfies this condition with $\nu=1$ and $L_1=L$.

The following theorem generalizes both the Lipschitz and the smooth case, recovering both bounds up to constant factors.
\begin{theorem}\label{thm:holder}
Let $h:\R^d \times \mathcal{S} \to \R_{\geq 0}$ be convex.
Denote by $H(\bx)=\E_{\bxi \sim D}[h(\bx,\bxi)]$.
Assume that $h(\cdot, \bxi_t)$ is $(L_\nu,\nu)$-H\"older-self bounded.
Then, setting $\eta_t= \min\left(\frac{1}{\|\bg_t\|^2}, \frac{\gamma}{h(\bx_t,\bxi_t)}\right)$ in Algorithm~\ref{alg:generalized_polyak}, we have
\[
\frac{1}{T}\sum_{t=1}^T \E[H(\bx_t)]
\leq Q\left(\frac{\|\bx_1-\bx^\star\|^2}{T\gamma}  +  2H(\bx^\star)\right),
\]
where $Q(y)=2 y+ L_\nu (2 \gamma y)^\frac{1+\nu}{2} \left(1+\frac{1}{\nu}\right)^\nu$.
\end{theorem}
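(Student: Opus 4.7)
The plan is to follow the Lipschitz-case argument in the proof of Theorem~\ref{thm:generalized_polyak} essentially verbatim, replacing the uniform bound $\|\bg_t\|^2 \leq G^2$ by the H\"older bound $\|\bg_t\|^2 \leq C\, h_t^p(\bx_t)$, where $p := \frac{2\nu}{1+\nu} \in [0,1]$ and $C := (1+\nicefrac{1}{\nu})^p L_\nu^{2/(1+\nu)}$. Writing $h_t(\cdot) := h(\cdot,\bxi_t)$, the first step is to apply Lemma~\ref{lemma:master_revised} to $\psi_t = \tfrac12 h_t^2$, drop the non-positive term coming from $\eta_t \leq \nicefrac{1}{\|\bg_t\|^2}$, and use $\eta_t h_t(\bx_t) \leq \gamma$ on the cross term, yielding
\[
\sum_{t=1}^T \eta_t h_t^2(\bx_t) \;\leq\; \|\bx_1 - \bx^\star\|^2 + 2\gamma \sum_{t=1}^T h_t(\bx^\star).
\]

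Next I would lower-bound the left-hand side by observing that $\min(a,b)\geq \frac{ab}{a+b}$ applied to $\eta_t = \min(\nicefrac{1}{\|\bg_t\|^2},\nicefrac{\gamma}{h_t(\bx_t)})$, together with the H\"older bound on $\|\bg_t\|^2$, gives
\[
\eta_t h_t^2(\bx_t) \;\geq\; \gamma\, \tilde B\bigl(h_t(\bx_t)\bigr), \qquad \tilde B(u) := \frac{u^2}{u + \gamma C u^p}.
\]
Granting that $\tilde B$ is convex on $[0,\infty)$ (the main obstacle, see below), Jensen's inequality applied to the average $\bar h_T := \tfrac{1}{T}\sum_t h_t(\bx_t)$ combined with the two displays yields
\[
\tilde B(\bar h_T) \;\leq\; \frac{\|\bx_1 - \bx^\star\|^2}{\gamma T} + \frac{2}{T}\sum_{t=1}^T h_t(\bx^\star).
\]

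The third step is to invert $\tilde B$: from $u = \tilde B^{-1}(y)$, i.e.\ $u^2 = yu + \gamma C y u^p$, a case split on whether $yu$ or $\gamma C y u^p$ dominates gives $u \leq \max(2y,\,(2\gamma C y)^{1/(2-p)}) \leq 2y + (2\gamma C y)^{1/(2-p)}$. Since $\frac{1}{2-p} = \frac{1+\nu}{2}$ and $C^{(1+\nu)/2} = (1+\nicefrac{1}{\nu})^\nu L_\nu$, this upper bound is exactly $Q(y)$. Applying this to the previous display (using that $\tilde B$ is strictly increasing, so $\tilde B^{-1}$ exists) and then taking expectations — noting that $Q$ is concave on $\R_{\geq 0}$ (sum of a linear term and $y^{(1+\nu)/2}$, with exponent in $[\tfrac12,1]$) and increasing, so Jensen may be pushed inside $Q$ — produces the stated bound, using $\E[h_t(\bx_t)] = \E[H(\bx_t)]$ by the independence of $\bxi_t$ from $\bx_t$.

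The main technical obstacle is the convexity of $\tilde B$, which I would establish by a direct second-derivative computation. Writing $g(u) = u + \gamma C u^p$ and $N(u) = u^2 + (2-p)\gamma C u^{p+1}$ so that $\tilde B' = N/g^2$, a short expansion gives
\[
N'(u)\,g(u) - 2 N(u)\,g'(u) \;=\; (1-p)\,\gamma C\bigl[\,p\,u^{p+1} + (2-p)\,\gamma C\, u^{2p}\bigr],
\]
which is non-negative on $[0,\infty)$ for $p \in [0,1]$, so $\tilde B'' = (N'g - 2Ng')/g^3 \geq 0$. This is consistent with the Lipschitz case ($p=0$) already treated in the proof of Theorem~\ref{thm:generalized_polyak} and with the smooth case ($p=1$), where $\tilde B$ degenerates to the linear function $u/(1+\gamma C)$.
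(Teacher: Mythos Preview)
Your proof is correct and follows essentially the same route as the paper: start from Lemma~\ref{lemma:master_revised}, lower-bound $\eta_t h_t^2(\bx_t)$ via $\min(a,b)\geq ab/(a+b)$ and the H\"older self-bound to obtain $\gamma B(h_t(\bx_t))$ for the same convex function $B=\tilde B$, apply Jensen, invert, and take expectations using concavity of $Q$. The only differences are cosmetic: you invert $\tilde B$ directly by a case split, whereas the paper first lower-bounds $B$ by $C(x)=\tfrac12\min(x,\,x^{2/(1+\nu)}/(\gamma K_\nu))$ and inverts $C$ (both yield the same $Q$); and you actually verify the convexity of $\tilde B$ via the second-derivative identity $N'g-2Ng'=(1-p)\gamma C[p\,u^{p+1}+(2-p)\gamma C\,u^{2p}]$, which the paper asserts but does not prove.
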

\begin{proof}
For simplicity, denote by $h_t(\bx)=h(\bx,\xi_t)$.

From the Lemma~\ref{lemma:master_revised}, we have
\[
\sum_{t=1}^T \eta_t \frac{1}{2}h^2_t(\bx_t)
\leq \frac12 \|\bx_1-\bx^\star\|^2 + \sum_{t=1}^T \frac{\eta_t}{2}\left(\eta_t -\frac{1}{\|\bg_t\|^2}\right) \|\bg_t\|^2 h^2_t(\bx_t)+ \sum_{t=1}^T \eta_t h_t(\bx_t) h_t(\bx^\star)~.
\]
For the last term in the r.h.s., we have
\[
\eta_t h_t(\bx_t) h_t(\bx^\star)
= \min\left(\frac{1}{\|\bg_t\|^2}, \frac{\gamma}{h_t(\bx_t)}\right) h_t(\bx_t) h_t(\bx^\star)
\leq \gamma h_t(\bx^\star) ~.
\]

Observe that if $h_t$ is $(L_\nu, \nu)$-H\"older-self-bounded then
\begin{align*}
\|g_t\|^2
\leq \left(1+\frac{1}{\nu}\right)^\frac{2\nu}{1+\nu} L^\frac{2}{1+\nu}_\nu (h_t(\bx)- \inf_{\bx}h_t(\bx))^\frac{2\nu}{1+\nu}
\leq K_\nu h_t(\bx)^\frac{2\nu}{1+\nu},
\end{align*}
where $K_\nu= \left(1+\frac{1}{\nu}\right)^\frac{2\nu}{1+\nu} L^\frac{2}{1+\nu}_\nu$.
Therefore, we have
\[
\min\left(\frac{h^\frac{1-\nu}{1+\nu}_t(\bx_t)}{K_\nu},\gamma\right) h_t(\bx_t)
\leq \min\left(\frac{h_t(\bx_t)}{\|\bg_t\|^2}, \gamma\right) h_t(\bx_t)
= \eta_t h^2_t(\bx_t)~.
\]
As before, we lower bound the minimum with the convex function $B(x)= \frac{x^\frac{2}{1+\nu}}{x^\frac{1-\nu}{1+\nu}+\gamma K_\nu}$:
\[
\min\left(\frac{h^\frac{1-\nu}{1+\nu}_t(\bx_t)}{K_\nu},\gamma\right) h_t(\bx_t)
\geq \frac{\gamma h^\frac{2}{1+\nu}_t(\bx_t)}{h^\frac{1-\nu}{1+\nu}_t(\bx_t)+\gamma K_\nu}
= \gamma B(h_t(\bx_t))~.
\]
As before, this allows us to use Jensen's inequality:
\begin{align*}
B\left(\frac{1}{T}\sum_{t=1}^T h_t(\bx_t)\right)
&\leq \frac{1}{T}\sum_{t=1}^T B(h_t(\bx_t))
\leq \frac{\|\bx_1-\bx^\star\|^2}{\gamma T} + \frac{2}{T}\sum_{t=1}^T h_t(\bx^\star)~.
\end{align*}

For simplicity of calculations, we now lower bound $B(x)$,
\begin{align*}
    C(x) \coloneqq 0.5 \min\left(x, \frac{x^\frac{2}{1+\nu}}{\gamma K_\nu}\right) = \frac{x^\frac{2}{1+\nu}}{2 \max\{x^\frac{1-\nu}{1+\nu},\gamma K_\nu\}} \leq B(x).
\end{align*}

Note that $C(x)$ is invertible and its inverse is
\begin{align*}
C^{-1}(y)
&=
\begin{cases}
2 y, & \text{ if } y\geq (\gamma K_\nu)^\frac{1+\nu}{1-\nu}\\
(2 \gamma K_\nu y)^\frac{1+\nu}{2}, & \text{ if } y< (\gamma K_\nu)^\frac{1+\nu}{1-\nu}
\end{cases}\\
&\leq 2 y+(2 \gamma K_\nu y)^\frac{1+\nu}{2}\\
&= 2 y+ L_\nu (2 \gamma  y)^\frac{1+\nu}{2} \left(1+\frac{1}{\nu}\right)^\nu~.
\end{align*}
Taking expectations and using Jensen's inequality gives the stated bound.
\end{proof}

\section{Proofs for Section~\ref{sec:negative}}
\label{sec:proof_instability}
\begin{lemma}\label{thm:unstable-gap}
    Let $f:\mathbb{R}^n \to \mathbb{R}^+$ where $f^\star = \inf_{\bx} f(\bx)$. Then for any $c \geq 0$ the following are equivalent:
    \begin{itemize}
        \item $f(\bx)-f^\star < cf^\star$,
        \item $f(\bx)-f^\star < \frac{c}{c+1}f(\bx)$.
    \end{itemize}
\end{lemma}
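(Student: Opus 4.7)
The plan is to show that both inequalities in the lemma are equivalent to the single inequality $f(\bx) < (c+1)f^\star$, so they are equivalent to each other. Since $c \geq 0$, the factor $c+1$ is strictly positive, which will justify multiplying/dividing without reversing inequality signs.

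First I would take the first statement $f(\bx) - f^\star < c f^\star$ and simply add $f^\star$ to both sides, obtaining $f(\bx) < (c+1)f^\star$. Next, for the second statement, I would rearrange $f(\bx) - f^\star < \frac{c}{c+1}f(\bx)$ by moving the right-hand side to the left:
\[
f(\bx) - \frac{c}{c+1}f(\bx) < f^\star,
\]
then factor the coefficient of $f(\bx)$ to get $\tfrac{1}{c+1}f(\bx) < f^\star$, and finally multiply both sides by $c+1 > 0$ to arrive again at $f(\bx) < (c+1) f^\star$. Since each manipulation is reversible (addition of a constant, multiplication by a strictly positive number), both chains are biconditional, and the equivalence follows.

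The proof has no real obstacle; the only thing to be careful about is that we never divide or multiply by a quantity that could be zero or negative. The positivity assumption $f:\R^n \to \R^+$ is not even strictly needed for the algebra (the chain of equivalences works for any real $f(\bx)$ and any $c \geq 0$), but noting it makes all quantities in the statement well-defined and non-negative, which is the intended setting for the application to the Polyak surrogate.
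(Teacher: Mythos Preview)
Your proposal is correct and follows essentially the same approach as the paper: both reduce each inequality to the common intermediate form $f(\bx) < (c+1)f^\star$ via reversible algebraic manipulations, using that $c+1>0$.
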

\begin{proof}
\begin{align*}
    f(\bx)-f^\star < cf^\star &\Leftrightarrow f(\bx) < (c+1)f^\star\\
    &\Leftrightarrow \frac{f(\bx)}{(c+1)} < f^\star\\
    &\Leftrightarrow f(\bx) -f^\star < \left(1-\frac{1}{c+1}\right) f(\bx) = \frac{c}{c+1}f(\bx)~. \qedhere
\end{align*}
\end{proof} 

{
\renewcommand{\theproposition}{\ref{thm:local-unstable}}
\begin{proposition}
    Suppose $h $ is convex, strictly positive, $L$-self-bounded, and satisfies the quadratic growth condition $h(\bx)- h^\star \geq \frac{\mu}{2}\|\bx-\bx^\star\|^2$, where $\bx^\star = \arg\min_{\bx} h(\bx)$ is the only fixed point of T~\eqref{eq:det-gen-polyak}. Then for any point $\bx \in S= \{\by: h (\by) - h^\star < h^\star\frac{\mu}{8L -\mu}\}$ we have
    \[
    \|T(\bx)-\bx^\star\| > \|\bx-\bx^\star\|~.
    \]
\end{proposition}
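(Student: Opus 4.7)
The natural starting point is the algebraic identity for the update. Since $\bx^\star$ is the only fixed point of $T$ and $\bx\neq \bx^\star$, we must have $\mathbf{0}\notin \partial h(\bx)$, so $\|\bg\|>0$ for any $\bg\in\partial h(\bx)$, and by strict positivity $h(\bx)>0$. Expanding the squared norm gives
\[
\|T(\bx)-\bx^\star\|^2 - \|\bx-\bx^\star\|^2 = \frac{h(\bx)}{\|\bg\|^2}\bigl(h(\bx) - 2\langle \bg, \bx-\bx^\star\rangle\bigr),
\]
so since the prefactor is strictly positive, the claim reduces to showing $h(\bx) > 2\langle \bg, \bx-\bx^\star\rangle$.

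Next I would upper bound the inner product using Cauchy--Schwarz together with the two structural assumptions. The $L$-self-boundedness gives $\|\bg\|^2 \leq 2L(h(\bx)-h^\star)$, while the quadratic growth condition gives $\|\bx-\bx^\star\|^2 \leq \tfrac{2}{\mu}(h(\bx)-h^\star)$. Multiplying and taking square roots,
\[
\langle \bg, \bx-\bx^\star\rangle \leq \|\bg\|\,\|\bx-\bx^\star\| \leq 2\sqrt{L/\mu}\,(h(\bx)-h^\star),
\]
so it suffices to prove $h(\bx) > 4\sqrt{L/\mu}\,(h(\bx)-h^\star)$.

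Now I translate the hypothesis. Applying Lemma~\ref{thm:unstable-gap} with $c=\mu/(8L-\mu)$ converts the assumption $h(\bx)-h^\star < h^\star\,\tfrac{\mu}{8L-\mu}$ into $h(\bx)-h^\star < \tfrac{\mu}{8L}\,h(\bx)$, that is, $h(\bx) > \tfrac{8L}{\mu}(h(\bx)-h^\star)$. So the desired inequality will follow once I verify the clean comparison $\tfrac{8L}{\mu} \geq 4\sqrt{L/\mu}$, equivalently $L/\mu \geq 1/4$.

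The main obstacle, and the one subtle piece, is establishing that $L\geq \mu/4$ is forced by the two assumptions (otherwise the stated region $S$ could be empty or my chain of inequalities could fail). I would derive this from the two assumptions themselves: from self-boundedness and convexity $\|\bg\|^2 \leq 2L(h(\bx)-h^\star) \leq 2L\,\langle \bg, \bx-\bx^\star\rangle \leq 2L\|\bg\|\,\|\bx-\bx^\star\|$, giving $\|\bg\| \leq 2L\|\bx-\bx^\star\|$; combining with the lower bound $\|\bg\|\,\|\bx-\bx^\star\|\geq h(\bx)-h^\star \geq \tfrac{\mu}{2}\|\bx-\bx^\star\|^2$ yields $\tfrac{\mu}{2}\|\bx-\bx^\star\| \leq \|\bg\| \leq 2L\|\bx-\bx^\star\|$, hence $L\geq \mu/4$. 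With that in hand, $\tfrac{8L}{\mu}\geq 4\sqrt{L/\mu}$ follows, and chaining $h(\bx) > \tfrac{8L}{\mu}(h(\bx)-h^\star) \geq 4\sqrt{L/\mu}\,(h(\bx)-h^\star) \geq 2\langle \bg,\bx-\bx^\star\rangle$ completes the proof.
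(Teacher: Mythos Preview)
Your proof is correct and follows essentially the same skeleton as the paper's: the same expansion of $\|T(\bx)-\bx^\star\|^2-\|\bx-\bx^\star\|^2$, the same Cauchy--Schwarz reduction to $h(\bx)>2\|\bg\|\,\|\bx-\bx^\star\|$, and the same use of Lemma~\ref{thm:unstable-gap} to turn the hypothesis into $h(\bx)>\tfrac{8L}{\mu}(h(\bx)-h^\star)$.

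The one difference is in how the two structural bounds are combined. The paper applies them \emph{sequentially}: from $h(\bx)>\tfrac{8L}{\mu}(h(\bx)-h^\star)$ it uses quadratic growth to get $\geq 4L\|\bx-\bx^\star\|^2$, and then the bound $\|\bg\|\leq 2L\|\bx-\bx^\star\|$ (which is exactly your derivation from self-boundedness plus convexity) to conclude $\geq 2\|\bg\|\,\|\bx-\bx^\star\|$. Here the $L$ and $\mu$ factors cancel exactly, so no auxiliary comparison is needed. You instead multiply the two bounds and take a square root, producing the factor $4\sqrt{L/\mu}$; this loses just enough information that you must separately verify $\tfrac{8L}{\mu}\geq 4\sqrt{L/\mu}$ via $L\geq \mu/4$. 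Your verification of $L\geq\mu/4$ is valid, but the paper's ordering sidesteps it entirely.
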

}
\begin{proof}
Let $\bx_t$ be in $S$ then by definition of $T$ we have
\begin{align*}
        \frac{1}{2}\|\bx_{t+1}-\bx^\star\|^2 &= \frac{1}{2}\|\bx_t-\bx^\star\|^2-\eta_t\langle\bg_t,\bx_t-\bx^\star\rangle +\frac{\eta_t^2}{2}\|\bg_t\|^2\\
        &\geq \frac{1}{2}\|\bx_t-\bx^\star\|^2-\eta_t\|\bg_t\|\|\bx_t-\bx^\star\| +\frac{\eta_t^2}{2}\|\bg_t\|^2 \\
        &= \frac{1}{2}\|\bx_t-\bx^\star\|^2-\frac{h(\bx_t)}{\|\bg_t\|}\|\bx_t-\bx^\star\| +\frac{h(\bx_t)^2}{2\|\bg_t\|^2}\\\
        &= \frac{1}{2}\|\bx_t-\bx^\star\|^2 + \frac{h(\bx_t)}{\|\bg_t\|}\left[\frac{h(\bx_t)}{2\|\bg_t\|}-\|\bx_t-\bx^\star\|\right].
\end{align*}
If $h (\bx_t) - h^\star < h^\star\frac{\mu}{8L -\mu}$ then we have by Lemma \ref{thm:unstable-gap}
\begin{align*}
    h(\bx_t)
    > \left(\frac{\nicefrac{\mu}{(8L-\mu)}+1}{\nicefrac{\mu}{(8L-\mu)}}\right)(h(\bx_t)-h^\star)
    = \frac{8L}{\mu}(h(\bx_t)-h^\star)~.
\end{align*}

Consequently,
\begin{align*}
    \frac{h(\bx_t)}{2\|\bg_t\|} &> \frac{4L}{\mu}\frac{(h(\bx_t)-h(\bx^\star))}{\|\bg_t\|}
    \geq 2L\frac{\|\bx_t-\bx^\star\|^2}{\|\bg_t\|}
    \geq \|\bx_t - \bx^\star\|~.
\end{align*}
Where the last inequality follows from $h$ being self-bounded and convex,
\[
    \frac{1}{2L}\|\bg_t\|^2
    \leq h(\bx_t)-h^\star\leq \langle \bg_t,\bx_t-\bx^\star\rangle \leq \|\bg_t\|\|\bx_t-\bx^\star\|~. \qedhere
\]
\end{proof}

\begin{proposition}\label{thm:local-unstable-nonsmooth}
    Suppose $h $ is convex, strictly positive, $L$-Lipschitz, and has a $\mu$-sharp minimum $h(\bx)- h^\star \geq \mu\|\bx-\bx^\star\|$, where $\bx^\star = \arg\min_{\bx} h(\bx)$ is the only fixed point of T~\eqref{eq:det-gen-polyak}. Then for any point $\bx \in S =\{\by: \by \neq \bx^\star, h (\by) - h^\star < h^\star\frac{\mu}{2L -\mu}\}$ we have
    \[
    \|T(\bx)-\bx^\star\| > \|\bx-\bx^\star\|~.
    \]
\end{proposition}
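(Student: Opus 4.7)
The plan is to follow the same template as the proof of Proposition~\ref{thm:local-unstable}, replacing the role of self-boundedness with Lipschitz continuity and the role of quadratic growth with sharpness. The starting point is unchanged: expand the squared distance, apply Cauchy--Schwarz to the inner product $\langle \bg_t, \bx_t-\bx^\star\rangle$, and substitute $\eta_t = h(\bx_t)/\|\bg_t\|^2$ to obtain
\[
\tfrac{1}{2}\|\bx_{t+1}-\bx^\star\|^2 \geq \tfrac{1}{2}\|\bx_t-\bx^\star\|^2 + \frac{h(\bx_t)}{\|\bg_t\|}\left[\frac{h(\bx_t)}{2\|\bg_t\|}-\|\bx_t-\bx^\star\|\right],
\]
so the claim reduces to showing that the bracket is strictly positive for every $\bx_t \in S\setminus\{\bx^\star\}$.

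Next I would convert the neighborhood condition into a lower bound on $h(\bx_t)$ in terms of the suboptimality gap via Lemma~\ref{thm:unstable-gap} with $c=\mu/(2L-\mu)$. This yields $h(\bx_t) > \tfrac{2L}{\mu}(h(\bx_t)-h^\star)$, which exactly parallels the inequality $h(\bx_t) > \tfrac{8L}{\mu}(h(\bx_t)-h^\star)$ used in the smooth case, only with a single factor of $L$ instead of $4L$ because sharpness is linear (not quadratic) in $\|\bx_t-\bx^\star\|$.

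Then I would chain three inequalities: the bound from Lemma~\ref{thm:unstable-gap}, the sharp-minimum hypothesis $h(\bx_t)-h^\star \geq \mu\|\bx_t-\bx^\star\|$, and the Lipschitz bound $\|\bg_t\|\leq L$. Concretely,
\[
\frac{h(\bx_t)}{2\|\bg_t\|} > \frac{L}{\mu}\cdot\frac{h(\bx_t)-h^\star}{\|\bg_t\|} \geq \frac{L}{\mu}\cdot\frac{\mu\|\bx_t-\bx^\star\|}{\|\bg_t\|} \geq \|\bx_t-\bx^\star\|,
\]
which makes the bracket strictly positive and hence $\|\bx_{t+1}-\bx^\star\| > \|\bx_t-\bx^\star\|$.

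There is no real obstacle here: once the smooth case is in hand, the adaptation is essentially mechanical, and the only thing to check is that the threshold constant $\mu/(2L-\mu)$ is calibrated so that the factor $\tfrac{2L}{\mu}$ coming out of Lemma~\ref{thm:unstable-gap} cancels exactly against the two other ratios ($\mu$ from sharpness in the numerator, $L$ from Lipschitzness in the denominator). The only subtlety worth flagging is that $\bg_t$ must be nonzero throughout the argument; this is automatic since $h$ is strictly positive and has $\bx^\star$ as its unique fixed point of $T$, so on $S\setminus\{\bx^\star\}$ the subgradient cannot vanish (else $\bx_t$ would be an additional fixed point).
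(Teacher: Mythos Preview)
Your proposal is correct and follows essentially the same approach as the paper: expand the squared distance, apply Cauchy--Schwarz and substitute $\eta_t=h(\bx_t)/\|\bg_t\|^2$ to reduce to the bracket, then use Lemma~\ref{thm:unstable-gap} with $c=\mu/(2L-\mu)$ together with sharpness and the Lipschitz bound $\|\bg_t\|\leq L$ to make the bracket strictly positive. Your remark that $\bg_t\neq 0$ on $S$ (since otherwise $\bx_t$ would be an extra fixed point of $T$) is a nice sanity check that the paper leaves implicit.
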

\begin{proof}
    Let $\bx_t \in S$ then by following similar steps to Lemma~\ref{thm:local-unstable} we have
    \begin{align*}
        \frac{1}{2}\|\bx_{t+1}-\bx^\star\|^2 &\geq \frac{1}{2}\|\bx_t-\bx^\star\|^2 + \frac{h(\bx_t)}{\|\bg_t\|}\left[\frac{h(\bx_t)}{2\|\bg_t\|}-\|\bx_t-\bx^\star\|\right]~.
    \end{align*}
By Lemma~\ref{thm:unstable-gap}, we have
\begin{align*}
    &h (\bx_t) - h^\star < h^\star\frac{\mu}{2L -\mu} \\
    &\Leftrightarrow h(\bx_t) > \left(\frac{\nicefrac{\mu}{(2L-\mu)}+1}{\nicefrac{\mu}{(2L-\mu)}}\right)(h(\bx_t)-h^\star)
    = \frac{2L}{\mu}(h(\bx_t)-h^\star)~.
\end{align*}
Therefore, by sharpness and Lipschitz property of $h$, we have
\[
    \frac{h(\bx_t)}{2\|\bg_t\|}
    >\frac{L(h(\bx_t)-h^\star)}{\mu \|\bg_t\|}
    \geq \frac{L\|\bx-\bx^\star\|}{\|\bg_t\|} \geq \|\bx_t-\bx^\star\|~. \qedhere
\]
\end{proof}

{\renewcommand{\theproposition}{\ref{thm:cycle}}
\begin{proposition}[Cycling and failure to converge]
\label{prop:cycle}
There exists a strictly positive smooth and strongly convex function $h$, and initial point $\bx_1$ such that iterates from update \eqref{eq:det-gen-polyak} cycle and satisfy the inequality $h(\frac{1}{t}\sum_{i=1}^t \bx_i)-h^\star\geq \delta>0$ for all $t$.
\end{proposition}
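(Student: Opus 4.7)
My strategy is to exhibit a strongly convex and smooth $h:\R\to\R_{>0}$ together with a starting point $x_1$ so that the iterates from \eqref{eq:det-gen-polyak} form a genuine 2-cycle $\{p,-q\}$ with $p>q>0$. Since the orbit then revisits only these two points, the running average $\bar x_t := \tfrac1t\sum_{i=1}^t x_i$ converges to $(p-q)/2$ and in fact stays strictly on one side of $x^\star$ for every $t\geq 1$; convexity of $h$ then converts this spatial gap into a uniform suboptimality gap $h(\bar x_t)-h^\star \geq \delta > 0$.

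\textbf{Ruling out the symmetric attempt and the construction.} I would first test $h(x) = \tfrac12 x^2 + a$, whose update $T(x)=x/2 - a/x$ admits only the symmetric 2-cycle $\pm\sqrt{2a/3}$; its orbit average is $0 = x^\star$, so no persistent gap arises. This identifies the culprit: $h$ must lack reflection symmetry around its minimizer. I therefore take the piecewise-quadratic
\[
h(x) \;=\; \tfrac12\bigl(\alpha x_+^2 + \beta x_-^2\bigr) + a, \qquad 0<\alpha<\beta,\ a>0,
\]
which is $C^1$ with $\beta$-Lipschitz derivative (hence $\beta$-smooth in the optimization sense), $\alpha$-strongly convex by a short case analysis across $x=0$, and strictly positive with $h^\star = a$ at $x^\star = 0$. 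On the positive half-line $T$ reduces to $x\mapsto x/2 - a/(\alpha x)$ and on the negative half-line to $x\mapsto x/2 - a/(\beta x)$.

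\textbf{Solving the cycle equations and deriving the uniform gap.} Setting $p = x_1 > 0$ and $-q = x_2 < 0$, the conditions $T(p) = -q$ and $T(-q) = p$ become $a = \alpha p(p/2 + q)$ and $a = \beta q(p + q/2)$. Eliminating $a$ yields the quadratic $\alpha r^2 + 2(\alpha-\beta) r - \beta = 0$ in $r := p/q$, with positive root $r = \bigl(\beta - \alpha + \sqrt{\alpha^2-\alpha\beta+\beta^2}\bigr)/\alpha$, which strictly exceeds $1$ whenever $\beta > \alpha$. Taking e.g.\ $\alpha=1$, $\beta=2$, $q=1$ gives $r = 1+\sqrt{3}$, $p = 1+\sqrt{3}$, and $a = 3+2\sqrt{3}$; a direct substitution then confirms $T(p)=-1$ and $T(-1)=p$. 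Because the orbit satisfies $p>q>0$, an elementary calculation on even and odd $t$ shows $\bar x_t \geq (p-q)/2 > 0$ for every $t$, so monotonicity of $h$ on $[0,\infty)$ gives
\[
h(\bar x_t) - h^\star \;\geq\; h\!\bigl((p-q)/2\bigr) - a \;=\; \tfrac{\alpha (p-q)^2}{8} \;=:\; \delta \;>\; 0 .
\]

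\textbf{Main obstacle.} The principal difficulty is that any $h$ symmetric around $x^\star$ conjugates $T$ to an odd map, under which 2-cycles are automatically antipodal and average exactly to $x^\star$; the persistent gap is therefore invisible to symmetric constructions. Breaking this symmetry while keeping a globally Lipschitz gradient and global strong convexity is the core design requirement, and the piecewise-quadratic above is the simplest object that meets both constraints with a closed-form cycle. If a $C^\infty$ $h$ is preferred, convolving with a sufficiently narrow mollifier perturbs the cycle equations by $O(\varepsilon)$ and a nearby asymmetric 2-cycle persists by an implicit-function argument, preserving the uniform gap.
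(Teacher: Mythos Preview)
Your construction is correct: the piecewise quadratic is $C^1$ with $\beta$-Lipschitz derivative (hence $\beta$-smooth in the paper's sense) and $\alpha$-strongly convex; the cycle equations reduce exactly as you wrote and the concrete numbers check out, giving the genuine asymmetric $2$-cycle $\{1+\sqrt{3},\,-1\}$; and your parity argument for the running average is sound, yielding an analytical lower bound $\delta=\alpha(p-q)^2/8$ on the suboptimality gap.

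Your route is, however, genuinely different from the paper's. The paper keeps the \emph{symmetric} quadratic $h(x)=x^2+1$ and instead exploits the cotangent double-angle identity: writing $x_t=\cot(2^{t-1}\theta)$ turns the Polyak map into angle-doubling on $\R/\pi\Z$, so choosing $\theta=\pi/7$ produces a $3$-cycle (since $8\equiv 1\bmod 7$), and the nonzero average is verified numerically. In other words, the paper addresses the symmetry obstruction you identified not by breaking the symmetry of $h$ but by using an odd-length orbit of the odd map $T$. What your approach buys is a fully analytical gap (no numerical check) and a conceptually simpler $2$-cycle, at the cost of a function that is only $C^1$ rather than $C^\infty$; since the statement uses ``smooth'' in the Lipschitz-gradient sense, this is not an issue, and your mollification remark is optional. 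What the paper's approach buys is an elegant closed form on a globally analytic $h$ and a direct link to the doubling dynamics, which also underpins their measure-zero result for the same quadratic.
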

}
\begin{proof}
The proof is constructive: consider $h:\mathbb{R} \to \mathbb{R}, h(x)=x^2+1$, so $h^\star=1$. 
Observe that the update is
\[
x_{t+1}
= x_t -\frac{x_t^2+1}{2x_t}
= \frac{x_t^2-1}{2 x_t}~.
\]
Now, we want to choose $x_1$ so that we oscillate between 3 possible values.

Set $x_1=\cot \theta$ where $\theta$ has to be determined.
The update becomes
\[
x_{2}
= \frac{x_1^2-1}{2 x_1}
= \frac{\cot^2 \theta -1}{2 \cot \theta}
= \cot (2 \theta),
\]
where in the last equality we used the identity for $\cot$.
Hence, we have $x_{t}= \cot (2^t \theta)$.
Given that we want to oscillate between 3 values, we want $x_{t+3}=x_t$, that is, $\cot(2^{t+3} \theta)=\cot(2^t \theta)$. We can achieve it if we select $\theta=\pi/7$. Indeed, we have
\begin{align*}
x_1&=\cot(\pi/7)\\
x_2&=\cot(2 \pi/7)\\
x_3&=\cot(4 \pi/7)\\
x_4&=\cot(8 \pi/7)=\cot(\pi+ \pi/7)
=\cot(\pi/7)=x_1~.
\end{align*}
Finally, one can verify numerically that $f(\frac{1}{t}\sum_{i=1}^t x_t)-f^\star>0.77$.
\end{proof}

\begin{proposition}\label{thm:bounded-unstable}
    There exists subregions within the unstable regions in Propositions ~\ref{thm:local-unstable} and ~\ref{thm:local-unstable-nonsmooth} where the stepsizes are upper bounded. 
\end{proposition}
\begin{proof}
    By convexity of $h$ we have $h-h^\star \leq \langle \bg_t,\bx_t-\bx^\star\rangle \leq \|\bg_t\|\|\bx_t-\bx^\star\|$,
    so $\|\bg_t\| \geq \frac{h(\bx)-h^\star}{\|\bx_t-\bx^\star\|}$. 
    By assumption in Lemmas ~\ref{thm:local-unstable} and ~\ref{thm:local-unstable-nonsmooth} the unstable region is $S = \{\bx: h(\bx) - h^\star < ch^\star\}$ where $c$ depends on the properties of $h$. 
    Therefore, for $\bx \in S$ and denoting $\bg \in \partial h(\bx)$ as any subgradient at $\bx$, we have
    \begin{align*}
        \frac{h(\bx)}{\|\bg\|^2}&\leq \frac{h(\bx)\|\bx-\bx^\star\|^2}{(h(\bx)-h^\star)^2}
        < \frac{(c+1)h^\star\|\bx-\bx^\star\|^2}{(h(\bx)-h^\star)^2}~.
    \end{align*}
    
    If $h$ has a sharp minimum, $h(\bx)- h^\star \geq \mu\|\bx-\bx^\star\|$, then we have $\frac{h(\bx)}{\|\bg_t\|^2}<\frac{c+1}{\mu^2}h^\star$. Therefore, the stepsizes are always bounded within S.

    Now consider the subregion $S_k = \{\bx: h(\bx) - h^\star < \frac{c}{k}h^\star\}$ for some $k >1$.
    Consider $\bx \in S \setminus S_k$, that is $(1+\frac{c}{k})h^\ast \leq  h(\bx) \leq (1+c)h^\ast$. 
    If $h$ statisfies the quadratic growth condition $h(\bx)- h^\star \geq \frac{\mu}{2}\|\bx-\bx^\star\|^2$ then
    \begin{align}
        \frac{h(\bx)}{\|\bg\|^2}
        < \frac{(c+1)h^\star\|\bx-\bx^\star\|^2}{(h(\bx)-h^\star)^2}
        \leq  \frac{2(c+1)h^\star}{\mu(h(\bx)-h^\star)} \leq \frac{2k(c+1)}{\mu c}~.
    \end{align}
    Where the last inequality follows since $\bx \notin S_k$. Therefore, stepsize is bounded within $S \setminus S_k$,
    and grows as we increase $k$.
\end{proof}

\begin{definition}[Lusin $(N^{-1})$ condition]\label{def:lusin}
Let $T:\mathbb{R}^d \to \mathbb{R}^k$. We define $T^{-1}$ over a set $S \subseteq \mathbb{R}^k$ as
\[
T^{-1}(S) = \{\bx : T(\bx) \in S\}.
\]
We say that $T$ satisfies $(N^{-1})$ condition if for every set $E$ of measure zero we have
that $T^{-1}(E)$ also has measure zero.
\end{definition}

\begin{lemma}\label{thm:null-convergence}
    Let $T : \mathbb{R}^n \to \mathbb{R}^n$ with a unique fixed point $\bx^\star$. If $\bx^\star$ is unstable, 
    that is, there exists $\delta$ such that $\bx \neq \bx^\star$ and $\|\bx-\bx^\star\| \leq \delta$, 
    then $\|T(\bx) - \bx^\star\| > \|\bx-\bx^\star\|$. Define $T^{-1}$ over a set $S \subseteq \mathbb{R}^n$ as
    $T^{-1}(S) = \{\bx : T(\bx) \in S\}$. If $T^{-k}(\{\bx^\star\})$ is of measure zero for any $k$,
    then 
    \[
    P\left(\lim_{t\to \infty}\bx_t = \bx^\star\right) =0~.
    \]
    In other words, the set of initializations that can converge to the fixed point has measure zero.
\end{lemma}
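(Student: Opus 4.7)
The plan is to show that the set of initializations $\bx_1$ for which $\bx_t \to \bx^\star$ is contained in the countable union $\bigcup_{k \ge 0} T^{-k}(\{\bx^\star\})$. By hypothesis each $T^{-k}(\{\bx^\star\})$ has Lebesgue measure zero, so a countable union of measure-zero sets is measure zero, giving the conclusion directly (and zero probability under any absolutely continuous initialization).

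The heart of the argument is a dichotomy based on the instability radius $\delta$. Suppose $\bx_1$ generates a sequence with $\bx_t \to \bx^\star$. By the definition of convergence, there exists an integer $N$ such that $\|\bx_t - \bx^\star\| \le \delta$ for every $t \ge N$. I now split into two cases. In Case (i), some iterate coincides with the fixed point: $\bx_k = \bx^\star$ for some $k \ge 0$. Since $\bx_k = T^{k-1}(\bx_1)$ (with $T^0$ taken to be the identity), this is exactly the statement that $\bx_1 \in T^{-(k-1)}(\{\bx^\star\})$. In Case (ii), $\bx_t \neq \bx^\star$ for every $t$. Then for each $t \ge N$ the iterate lies in the \emph{punctured} $\delta$-ball around $\bx^\star$, so the instability hypothesis applies and yields
\[
\|\bx_{t+1} - \bx^\star\| \;>\; \|\bx_t - \bx^\star\|, \qquad \forall t \ge N.
\]
Hence the tail $\{\|\bx_t - \bx^\star\|\}_{t \ge N}$ is strictly increasing, which contradicts $\|\bx_t - \bx^\star\| \to 0$. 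So Case (ii) cannot occur under the assumption of convergence.

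Combining the two cases, every $\bx_1$ whose orbit converges to $\bx^\star$ must lie in $\bigcup_{k \ge 0} T^{-k}(\{\bx^\star\})$, which completes the proof. I do not anticipate any serious obstacle: the only subtlety is that the instability condition is vacuous at the fixed point itself, which is precisely why finite-time hitting must be peeled off as Case (i), and why the measure-zero hypothesis on all preimages (not just $\{\bx^\star\}$) is needed.
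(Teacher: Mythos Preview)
Your proof is correct and follows essentially the same approach as the paper: both argue that if $\bx_t \to \bx^\star$ but no iterate ever equals $\bx^\star$, then the tail lies in the punctured $\delta$-ball where instability forces the distances $\|\bx_t - \bx^\star\|$ to be strictly increasing, contradicting convergence; hence convergent orbits must hit $\bx^\star$ in finite time and so lie in the countable union $\bigcup_{k\ge 0} T^{-k}(\{\bx^\star\})$ of null sets. Your explicit Case (i)/Case (ii) split is slightly cleaner than the paper's phrasing, but the argument is the same.
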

\begin{proof}
    We divide $\mathbb{R}^n$ into two sets, $S = \bigcup_{k=1}^{\infty}T^{-k}(\{\bx^\star\})$, and its compliment $S^c$. $S$ represents the points that can exactly reach the unique minimizer $\bx^\star$.
    If $T^{k}(\{\bx^\star\})$ is a null set for every $k$ then so is $S$
    since the countable union of null sets is a null set.

    Now we show that for all initializations in $\bx_1 \in S^c$, $\bx_1$ cannot converge to $\bx^\star$.
    Suppose the contrary, $\lim_{t \to \infty} \bx_t = \bx^\star$.
    Let $B = \{\by: \by \neq \bx^\star, \|\bx-\bx^\star\|\leq \delta\}$.
    Since $\bx_t \to \bx^\star$ there exists a step $n$ where $\{\bx_{t}\}_{t\geq n} \subseteq B$.
    Similarly, there exists $n' \geq n$ where $\|\bx_{t}-\bx^\star\|\leq \|\bx_n-\bx^\star\|$ for all $t\geq n'$.
    This is a contradiction as we have that $\|\bx_n-x^\star\| < \|\bx_{n+1}-\bx^\star\| < \cdots <\|\bx_{n'}-\bx^\star\|$.
    Therefore, the initializations that allow for $\bx_t \to \bx^\star$ coincide exactly with the null set $S$.
\end{proof}

\end{document}